\DeclareRobustCommand{\subto}{%
  \mathrel{\mathpalette\short@to\relax}%
}
\newcommand{\short@to}[2]{%
  \mkern2mu
  \clipbox{{.35\width} 0 0 0}{$\m@th#1\vphantom{+}{\rightarrow}$}%
  }
\DeclarePairedDelimiterX{\setbuilder}[1]{\{}{\}}{\setargs{#1}}
\NewDocumentCommand{\setargs}{>{\SplitArgument{1}{;}}m}
{\setargsaux#1}
\NewDocumentCommand{\setargsaux}{mm}
{\IfNoValueTF{#2}{#1} {#1\nonscript\:\delimsize\vert\allowbreak\nonscript\:\mathopen{}#2}}%
\newtheorem*{rep@theorem}{\rep@title}
\newcommand{\newreptheorem}[2]{%
\newenvironment{rep#1}[1]{%
 \def\rep@title{#2 \ref{##1}}%
 \begin{rep@theorem}}%
 {\end{rep@theorem}}}
\newtheorem*{rep@corollary}{\rep@title}
\newcommand{\newrepcorollary}[2]{%
\newenvironment{rep#1}[1]{%
 \def\rep@title{#2 \ref{##1}}%
 \begin{rep@corollary}}%
 {\end{rep@corollary}}}
\numberwithin{equation}{section}
\newtheorem{theorem}{Theorem}[section]
\newtheorem{lemma}[theorem]{Lemma}
\newtheorem{proposition}[theorem]{Proposition}
\newtheorem{corollary}[theorem]{Corollary}
\theoremstyle{definition}
\newtheorem{definition}[theorem]{Definition}
\newtheorem{remark}[theorem]{Remark}
\newtheorem{notation}[theorem]{Notation}
\newcommand{\R}{\mathbb R}
\newcommand{\N}{\mathbb N}
\newcommand{\cat}[1]{#1}
\newcommand{\rips}{\mathcal{R}}
\newcommand{\srips}{\mathcal{SR}}
\DeclareMathOperator{\op}{op}
\newcommand{\gr}[1]{\text{gr}(#1)}
\newcommand{\Bary}[1]{{#1}^{+}}
\newcommand{\Poset}{{\mathbb N}^{\mathrm{op}}\times [0, \infty)}
\newcommand{\Simp}{\mathbf{Simp}}
\newcommand{\Top}{\mathbf{Top}}
\renewcommand{\Vec}{\mathbf{Vec}}
\newcommand{\Vect}{\mathbf{Vec}}
\newcommand{\eps}{\ensuremath{\epsilon}}
\newcommand{\subdiv}{\mathcal{SF}}
\DeclareMathOperator{\colim}{colim}
\DeclareMathOperator{\diam}{diam}
\DeclareMathOperator{\im}{im}
\newcommand{\Ch}[1]{C_{#1}\mkern 2mu}
\tikzset{%
  symbol/.style={
    draw=none,
    every to/.append style={
      edge node={node [sloped, allow upside down, auto=false]{$#1$}}
    },
  },
}
\begin{document}

\title[Sparse Approximation of the Subdivision-Rips Bifiltration]{Sparse Approximation of the Subdivision-Rips Bifiltration for Doubling Metrics}
\date{\today}

\author{Michael Lesnick}
\address{Department of Mathematics and Statistics, University at Albany - SUNY, Albany, NY, USA}
\email{\href{mailto:mlesnick@albany.edu}{mlesnick@albany.edu}}

\author{Kenneth McCabe}
\address{Department of Mathematics, Northeastern University, Boston, MA, USA}
\email{\href{mailto:mccabe.ke@northeastern.edu}{mccabe.ke@northeastern.edu}}

\begin{abstract}
The Vietoris-Rips filtration, the standard filtration on metric data in topological data analysis, is notoriously sensitive to outliers.  
Sheehy's subdivision-Rips bifiltration $\srips(-)$ is a density-sensitive refinement that is robust to outliers in a strong sense, but whose 0-skeleton has exponential size. For $X$ a finite metric space of constant doubling dimension and fixed $\epsilon>0$, we construct a $(1+\epsilon)$-homotopy interleaving approximation of $\srips(X)$ whose $k$-skeleton has size $O(|X|^{k+2})$.  For $k\geq 1$ constant, the $k$-skeleton can be computed in time $O(|X|^{k+3})$.
\end{abstract}

\maketitle

\section{Introduction}
This paper is a companion to a recent paper by the authors \cite{lesnickNerveModels2024}.   Our primary aim is to resolve a conjecture from \cite{lesnickNerveModels2024} concerning approximations of Sheehy's \emph{subdivision-Rips bifiltration} \cite{sheehyMulticoverNerve2012a} for metric spaces of bounded doubling dimension.
\subsection{Context}
We begin with a brief discussion of the context for our work, referring the reader to \cite{botnanIntroductionMultiparameter2023,blumbergStability2Parameter2022,lesnickNerveModels2024} for additional context.  

The \emph{(Vietoris-)Rips filtration} $\rips(-)$ is the standard filtration on metric data in topological data analysis.  It is widely used in applications, usually via homology computations, and has been the subject of extensive theoretical work.  For $X$ a finite metric space, the $k$-skeleton of $\mathcal R(X)$ has size $O(|X|^{k+1})$.  

While $\mathcal R(X)$ is stable to small perturbations of the data \cite{chazal2009gromov,chazal2014persistence}, it is highly unstable to outliers \cite[Section $4$]{blumbergRobustStatistics2014}, and can be insensitive to variations in density \cite[Figure $2$]{lesnickInteractive15}.  One natural way to address these limitations is to instead construct a \emph{bifiltration}, treating distance and density as separate parameters \cite{carlssonTheoryMultidimensional2009}.  Multiparameter persistence, the subfield of TDA that works with such multiparameter filtrations and their homology, has become one of the most active areas of TDA in recent years, with substantial progress on several fronts; see \cite{botnanIntroductionMultiparameter2023} for a detailed introduction.  

There have been several proposals for density-sensitive bifiltrations on metric data, offering different trade-offs between size, computability, and robustness to outliers \cite{lesnickInteractive15,blumbergStability2Parameter2022,sheehyMulticoverNerve2012a,hellmerDensity24}.  Among these, the subdivision-Rips bifiltration $\srips(-)$, introduced in \cite{sheehyMulticoverNerve2012a}, is notable for satisfying a strong robustness property \cite[Theorem 1.6\,(iii) and Remark 2.16]{blumbergStability2Parameter2022}.  However, we showed in \cite{lesnickNerveModels2024} that for a large class of planar point sets $X$, if $\mathcal F$ is a functor valued in simplicial complexes that is weakly equivalent to $\srips(X)$, then the 0-skeleton of $\mathcal F$ has size exponential in $|X|$.  (See \cref{Sec:Weak-Equivalence} for the definition of weak equivalence.)  This implies that exact computations of $\srips(X)$ are out of reach, even up to homotopy, and raises the question of whether $\srips(X)$ can be approximated by an object of reasonable size.  

In this paper, we will work with a notion of $(1+\epsilon)$-approximation of bifiltrations defined in terms of  \emph{(multiplicative) homotopy interleavings}, following \cite{blumbergUniversalityHomotopy2023,lesnickNerveModels2024,buchetSparseHigher2023b}; see \cref{Def:Approximation}.  Here $\epsilon\geq 0$, and the smaller the value of $\epsilon$, the better the approximation.  Informally, a $(1+\eps)$-approximation to $\srips(X)$ preserves the robustness property of $\srips(X)$, up to error $\eps$.

Several approximations of $\srips(X)$ are known: The \emph{degree-Rips bifiltration} $\mathcal{DR}(X)$, a simple and well-studied density-sensitive bifiltration, is (after linear rescaling) a $\sqrt{3}$-approximation of $\srips(X)$ whose $k$-skeleton has size $O(|X|^{k+2})$ \cite{lesnickInteractive15,blumbergStability2Parameter2022}.  The 2-parameter persistence software packages RIVET \cite{lesnickInteractive15} and Persistable \cite{Scoccola2023,rolle2020stable} support computations with $\mathcal{DR}(X)$.  
  It was recently discovered that $\srips(X)$ in fact admits a $\sqrt{2}$-approximation whose $k$-skeleton also has size $O(|X|^{k+2})$; indeed, \cite{hellmerDensity24} and  \cite[Corollary 1.5\,(i)]{lesnickNerveModels2024} give two different but weakly equivalent (and closely related) constructions of such a $\sqrt{2}$-approximation. 
 
 For arbitrary finite metric spaces $X$, the approximation factor of $\sqrt{2}$ is optimal among constructions with polynomially-sized skeleta: We showed in \cite[Corollary 1.5\,(ii)]{lesnickNerveModels2024} that for fixed $c\in [1,\sqrt{2})$, the 0-skeleton of any $c$-approximation of $\srips(X)$ has worst-case exponential size.    
However, if one restricts attention to a smaller class of metric spaces, one can get tighter approximations of $\srips(X)$: Our result \cite[Corollary 1.8\,(ii)]{lesnickNerveModels2024} established that for fixed $\epsilon \in (0,1)$, $p\in [1,\infty]$, and finite $X\subset \R^d$ endowed with the $\ell_p$-metric, there exists a $(1+\epsilon)$-approximation to $\srips(X)$ whose $k$-skeleton has size polynomial in $|X|$.  But the degree of the polynomial in this size bound is quite large, and depends exponentially on $d$ and $\eps$.

\subsection{Contributions}
In  \cite[Conjecture 1.9]{lesnickNerveModels2024}, we conjectured that \cite[Corollary 1.8\,(ii)]{lesnickNerveModels2024} extends to metric spaces of bounded doubling dimension.  In the present paper, we resolve this conjecture, and in fact do so with a much tighter size bound than that of \cite[Corollary 1.8\,(ii)]{lesnickNerveModels2024}.  Our main result is the following:
\begin{theorem} \label{Theo:Main-Theorem}
For $X$ a finite metric space of constant doubling dimension and any fixed $\epsilon>0$, there exists a $(1+\eps)$-approximation $\mathcal {NA}(X)$ to $\srips(X)$ whose $k$-skeleton has size $O(|X|^{k+2})$. \end{theorem}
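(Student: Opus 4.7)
The plan is to extend the nerve-model framework of the companion paper \cite{lesnickNerveModels2024} by using covers adapted to the local geometry of $X$, rather than the coarse covers that force the $\sqrt 2$ factor in the general-metric case. The key quantitative input is that in doubling dimension $d$, any $r$-ball can be covered by $O_{d,\eps}(1)$ balls of radius $\eps r$; this refinement is exactly what is needed to sharpen the approximation factor to $1+\eps$ while holding the size budget at $O(|X|^{k+2})$.

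I would first fix a greedy permutation of $X$, which produces a nested family of $s$-nets $N_s \subseteq X$ (for $s>0$) with the property that every $r$-ball meets only $O_{d,\eps}(1)$ points of $N_{\eps r}$. For each bifiltration parameter $(r,t)$, I would form a cover $\mathcal U_{r,t}$ of the ``$t$-dense part'' of $X$ by balls of radius $(1+\eps)r$ centered at the points of $N_{\eps r}$ whose $r$-neighborhood contains at least $t$ data points, and define $\mathcal{NA}(X)_{r,t}$ to be the nerve of $\mathcal U_{r,t}$. The nested structure of the greedy nets, together with the monotonicity of the density condition, ensures that these nerves assemble into a genuine bifiltration.

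Next, I would establish a $(1+\eps)$-homotopy interleaving between $\mathcal{NA}(X)$ and $\srips(X)$ by specializing the Nerve-Lemma and blowup arguments of \cite{lesnickNerveModels2024}. On one side, each cover element has radius $(1+\eps)r$, so the Nerve Lemma identifies $\mathcal{NA}(X)_{r,t}$ with a subspace of a blowup that embeds in $\srips(X)_{(1+O(\eps))r,\,t}$; on the other, the doubling hypothesis guarantees that every simplex of $\srips(X)_{r,t}$ is supported in $O_{d,\eps}(1)$ elements of $\mathcal U_{(1+O(\eps))r,\,t}$, yielding the reverse comparison map. The size bound then follows from doubling: at any $(r,t)$, each of the $\leq |X|$ cover elements meets only $O_{d,\eps}(1)$ others, so $\mathcal{NA}(X)_{r,t}$ has $O_{d,\eps}(|X|)$ $k$-simplices, and summing over the at most $|X|$ values of $t$ at which new simplices appear yields a total $k$-skeleton of size $O(|X|^{k+2})$.

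The main obstacle I anticipate is verifying the interleaving with the sharpened constant $1+\eps$. In the general-metric setting, the loss of a factor of $\sqrt 2$ is structural: two cover centers witnessing the same Rips simplex can be nearly $2r$ apart. The doubling hypothesis prevents this by constraining how cover balls can overlap, but translating this into interleaving maps that are simultaneously natural in the scale parameter $r$ and the density parameter $t$ requires tight compatibility between the net hierarchy and the density thresholds. I expect this coordination to be the technical core of the proof; homotopy commutativity of the resulting comparison diagrams should then follow by standard blowup arguments, once the point-set data is correctly set up.
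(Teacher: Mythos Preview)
Your proposal has a genuine gap in how it treats the density parameter. In $\srips(X)_{(j,r)}$ the parameter $j$ records the minimum \emph{dimension} of a simplex in a flag; in the nerve model of \cite{lesnickNerveModels2024}, a set of cover elements forms a simplex at level $j$ exactly when their common intersection has at least $j$ vertices. Your condition instead filters cover balls by whether their \emph{center} has $\geq t$ neighbors---a degree condition on individual balls, not an intersection condition on tuples of them. What you describe is therefore a sparsified degree-type bifiltration, and the gap between degree-type and subdivision-type constructions is precisely the constant-factor loss you are trying to eliminate. Doubling does not close it: the packing lemma bounds how \emph{many} balls overlap, but a Rips clique of diameter $2r$ need not lie in any single ball of radius $(1+\eps)r$, so replacing maximal Rips simplices by metric balls already costs a fixed multiplicative factor in $r$. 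Your reverse comparison map (``every simplex of $\srips(X)_{r,t}$ is supported in $O(1)$ cover elements'') thus cannot land at the same density level $t$, and the claimed $(1+\eps)$-interleaving breaks. Your size accounting is also incomplete: you sum over $t$ but not over the critical scales $r$, and nothing in the sketch controls how many distinct nerves arise as $r$ varies.

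The paper avoids all of this by staying at the one-parameter level. It builds a filtration $\mathcal A(X)$ that is $(1+\eps)$-interleaved with $\rips(X)$ as \emph{ordinary} filtrations, using genuine Rips simplices rather than balls as building blocks: for each $x$ and scale $r$, an $(\eps r/2)$-packing $W$ of $B(x,2r)$ has constant size by the packing lemma, and the $O(1)$ maximal cliques of $\rips(W)_{r(1+\eps/2)}$, thickened by $\eps r/2$, yield simplices of $\rips(X)_{r(1+\eps)}$ that together contain every simplex of $\rips(X)_r$ through $x$ (\cref{Lem:Persistent-Cliques}). The $(1+\eps)$-interleaving between $\rips(X)$ and $\mathcal A(X)$ then lifts to one between $\srips(X)$ and $\mathcal{SA}(X)$ by \cref{Lem:Inerleaving_and_Subdivision}, with no further work on the density coordinate. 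Finally, a well-separated pair decomposition (\cref{Thm:WSPD}) shows that $\mathcal A(X)$ has only $O(|X|)$ distinct complexes, each with $O(|X|)$ maximal simplices, giving $m_k(\mathcal A(X))=O(|X|^{k+2})$; applying \cref{Theo:Nerve-Model}\,(i) to $\mathcal A(X)$ yields $\mathcal{NA}(X)$.
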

For $k\geq 1$ constant, we also give a straightforward algorithm to compute the $k$-skeleton of $\mathcal {NA}(X)$ in time $O(|X|^{k+3})$; see \cref{Sec:Computation}.

 \cref{Theo:Main-Theorem} tells us that for finite metric spaces $X$ of constant doubling dimension, $\srips(X)$ admits approximations to \emph{arbitrary accuracy} satisfying the same asymptotic size bounds as the $\sqrt{3}$- and $\sqrt{2}$-approximations of previous work.  However, the bound of \cref{Theo:Main-Theorem} hides a large constant that depends exponentially on $\eps$ and the doubling dimension.  As such, the path from our results to practical computations is not yet clear.  We leave the exploration of this to future work.  

The approximation $\mathcal NA(X)$ is generally not a bifiltration, but rather a \emph{semifiltration}, meaning that its structure maps are guaranteed to be inclusions in only one of the parameter directions (see \cref{Sec:Filtrations_Bifiltrations} below).  The approximations of \cite{lesnickNerveModels2024} mentioned above are also  semifiltrations.  However, as explained in \cite[Appendix A]{{lesnickNerveModels2024}}, a construction of Kerber and Schreiber \cite{kerber2019barcodes} extends to convert a simplicial semifiltration of bounded pointwise dimension to a bifiltration, with only logarithmic increase in size.  This, together with \cref{Theo:Main-Theorem}, implies the following (cf. \cite[Corollary 1.10]{lesnickNerveModels2024}):

\begin{corollary} \label{Cpr:Bifiltration}
For $X$ a finite metric space of constant doubling dimension and any fixed $\epsilon>0$, there exists a \emph{bifiltered} $(1+\eps)$-approximation to the $k$-skeleton of $\srips(X)$ that has size $O(|X|^{k+2}\log |X|)$.
\end{corollary}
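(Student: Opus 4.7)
The plan is to obtain this corollary as a direct combination of \cref{Theo:Main-Theorem} with the semifiltration-to-bifiltration conversion described in \cite[Appendix A]{lesnickNerveModels2024}, following the same template as \cite[Corollary 1.10]{lesnickNerveModels2024}.

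First, I would invoke \cref{Theo:Main-Theorem} to produce the semifiltration $\mathcal{NA}(X)$, a $(1+\eps)$-approximation to $\srips(X)$ whose $k$-skeleton has size $O(|X|^{k+2})$. The key structural feature to record is that, as a semifiltration, the structure maps of $\mathcal{NA}(X)$ are guaranteed to be inclusions along one of the two parameter directions, and that the $k$-skeleton has pointwise (combinatorial) dimension at most $k$ at every index.

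Next, I would apply the Kerber--Schreiber conversion of \cite{kerber2019barcodes}, in the form established in \cite[Appendix A]{lesnickNerveModels2024}, which takes a simplicial semifiltration of bounded pointwise dimension as input and outputs a weakly equivalent \emph{bifiltration} at the cost of a multiplicative $O(\log |X|)$ blow-up in total size. Since weak equivalence refines $(1+\eps)$-homotopy interleaving, the resulting bifiltration remains a $(1+\eps)$-approximation of $\srips(X)$; combined with the size bound of \cref{Theo:Main-Theorem}, its $k$-skeleton has size $O(|X|^{k+2}\log |X|)$, which is exactly the claim.

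The only point that requires attention is verifying that $\mathcal{NA}(X)$ meets the input hypotheses of the conversion (in particular that the pointwise dimension bound holds uniformly after restricting to the $k$-skeleton, so that the $\log|X|$ overhead is indeed the correct accounting); once this is checked, the corollary is essentially a bookkeeping step assembling the two existing ingredients.
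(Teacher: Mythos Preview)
Your proposal is correct and matches the paper's approach exactly: the paper does not give a formal proof of this corollary but simply states (in the sentence preceding it) that it follows from \cref{Theo:Main-Theorem} together with the Kerber--Schreiber conversion of \cite{kerber2019barcodes} as extended in \cite[Appendix~A]{lesnickNerveModels2024}, citing \cite[Corollary~1.10]{lesnickNerveModels2024} as the template. Your write-up is, if anything, more detailed than the paper's own justification.
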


\subsection{Proof Strategy}
The main step in our proof of  \cref{Theo:Main-Theorem} is to show that for $X$ a finite metric space of bounded doubling dimension, its Rips filtration $\rips(X)$ admits a $(1+\epsilon)$-interleaving approximation $\mathcal A(X)$ with $O(|X|)$ distinct simplicial complexes, each with $O(|X|)$ maximal simplices.  Letting $\mathcal{SA}(X)$ denote the subdivision bifiltration of $\mathcal A(X)$, the main result of \cite{lesnickNerveModels2024} (\cref{Theo:Nerve-Model} below) then yields a functor $\mathcal{NA}(X)$ valued in simplicial complexes that is weakly equivalent to $\mathcal{SA}(X)$ and has size $O(|X|^{k+2})$.  The functor $\mathcal{NA}(X)$ is defined via a nerve construction (see \cref{Sec:Nerve_Models}).  The interleaving between $\rips(X)$ and $\mathcal A(X)$ induces an interleaving between $\srips(X)$ and $\mathcal{SA}(X)$ (see \cref{Lem:Inerleaving_and_Subdivision}), implying that $\mathcal{NA}(X)$ is a $(1+\eps)$-homotopy interleaving approximation of $\srips(X)$.

To prove that  $\mathcal A(X)$ has $O(|X|)$ distinct simplicial complexes (\cref{Lem:Num_Distinct_Complexes}), we apply a result of Har-Peled and Mendel \cite[Lemma 5.1]{har-peledFastConstruction2005}, which says that doubling metrics have \emph{well-separated pair decompositions} of linear size (see \cref{Sec:WSPDs}).  In fact, one can prove both \cref{Theo:Main-Theorem} and the runtime bound on our main algorithm without using either \cite[Lemma 5.1]{har-peledFastConstruction2005} or \cref{Lem:Num_Distinct_Complexes}, but \cref{Lem:Num_Distinct_Complexes} enables simplifications of both proofs.

\subsection{Related Work on Linear Approximations of (Bi)filtrations}
The problem of approximating a 1-parameter filtration by one of linear size has been extensively studied by applied topologists, first in seminal work of Sheehy  on approximations of Rips filtrations \cite{sheehyLinearSizeApproximations2013a}, and subsequently in many other papers \cite{choudhary2021improved,brehm2018sparips,sheehy2021sparse,choudhary2019improved,botnan2015approximating,brun2019sparse,dey2019simba,dey2014computing,buchetEfficientRobust2016,cavanna2015geometric,choudhary2019polynomial}. 

 In the 2-parameter setting, Buchet, Dornelas, and Kerber have given a linear-size $(1+\epsilon)$-approximation of the \emph{multicover bifiltration} for any fixed $\epsilon>0$ \cite{buchetSparseHigher2023b}.  The multicover bifiltration is a density-sensitive bifiltration on a point cloud in Euclidean space satisfying a robustness property similar to that of $\mathcal{SR}(X)$ \cite[Theorem 1.6\,(i)]{blumbergStability2Parameter2022}.   However, the linear size bound of \cite{buchetSparseHigher2023b} requires that one of the bifiltration parameters is restricted to be less than some constant $\mu$; the given bound depends exponentially on $\mu$.  In comparison, our size bound on $\mathcal{NA}(X)$ applies to a larger class of data sets $X$ (namely, finite metric spaces of bounded doubling dimension) and requires no parameter thresholding.  On the other hand, our bound is not linear in $|X|$, and applies only to the fixed-dimensional skeleta of $\mathcal{NA}(X)$.

\subsection{Outline} \cref{Sec:Preliminaries} reviews preliminaries.  
  \cref{Sec:Proof_Main} gives the proof of \cref{Theo:Main-Theorem}.   \cref{Sec:Computation} presents our algorithm for computing $\mathcal{NA}(X)$.  

\section{Preliminaries}
\label{Sec:Preliminaries}

In this section, we cover the preliminaries needed for our results. \cref{Sec:Filtrations_Bifiltrations,sec:Interleavings} discuss (bi)filtrations and homotopy interleavings, closely following parts of \cite[Section 2]{lesnickNerveModels2024}.  \cref{Sec:Nerve_Models} reviews the nerve models of subdivision bifiltrations introduced in \cite{lesnickNerveModels2024}.  \cref{Sec:Doubling} introduces doubling dimension.  \cref{Sec:WSPDs} discusses well-separated pair decompositions.

\subsection{Filtrations and Bifiltrations}
\label{Sec:Filtrations_Bifiltrations}

We regard a poset $P$ as a category in the usual way, i.e., the set of objects is $P$ and morphisms are pairs $p\leq q$. For any category $\mathbf C$, functor $F:P\to \mathbf C$, and $p\leq q$ in $P$, we write $F(p)$ as $F_p$ and $F(p\leq q)\colon F(p)\to F(q)$ as $F_{p \subto q}$.  We call the morphisms $F_{p \subto q}$ \emph{structure maps}.   The functors $P\to \mathbf C$ form a category $\mathbf C^P$ whose morphisms are the natural transformations.

Given posets $P$ and $Q$, we regard the Cartesian product $P\times Q$ as a poset, where $(p,q)\leq (p',q')$ if and only if $p\leq p'$ and $q\leq q'$.  Let $\mathbb N^{\op}=\{1,2,\ldots\}$, regarded as a totally ordered set with the opposite of its usual order.    Let $\Simp$ denote the category of abstract simplicial complexes and simplicial maps.  By way of geometric realization, we regard $\Simp$ as a subcategory of the category $\Top$ of topological spaces and continuous maps.

A \emph{filtration} is a functor $\mathcal F\colon T\to \Simp$ for some totally ordered set $T$, such that all structure maps are inclusions.  In this paper, a \emph{bifiltration} is a functor $\mathcal F\colon \N^{\op}\times [0,\infty)\to \Simp$ such that all structure maps are inclusions.  More generally,  a \emph{semifiltration} is a functor $\mathcal F\colon \N^{\op}\times [0,\infty)\to \Simp$ such that all structure maps of the form $\mathcal F_{(p,q) \subto (p',q)}$ are inclusions.

\subsubsection{Rips Filtrations} 
\label{Sec:Rips-Filtrations}

Given a metric space $X=(X,\partial)$, we define its Rips filtration $\rips(X)\colon [0,\infty)\to \Simp$ by \[\rips(X)_r=\{\sigma\subset X\mid 0<|\sigma|<\infty,\ \diam(\sigma) \leq 2r\},\]
where \[\diam(\sigma)=\max_{x,y\in \sigma}\partial(x,y)\] is the diameter of $\sigma$.

\subsubsection{Subdivision Bifiltrations} A sequence of nested simplices \[\sigma_1 \subset \sigma_2 \subset \cdots \subset \sigma_m\] of an abstract simplicial complex $L$ is called a \emph{flag} of $L$. The set $\Bary{L}$ of all flags of $L$ forms a simplicial complex called the \emph{barycentric subdivision} of $L$.  For each $j \in \N$, define $\mathcal S(L)_j$ to be the subcomplex of $\Bary L$ spanned by all flags whose minimum element has dimension at least $j-1$.  Then in particular, $\mathcal S(L)_1 = \Bary L$.  Varying $j$ yields a filtration
\[
\mathcal S(L)\colon \mathbb N^{\mathrm{op}}\to \Simp.
\]
For any filtration ${\mathcal F:[0,\infty)\to \Simp}$, the family of filtrations $(\mathcal S(\mathcal F_r))_{r\in [0,\infty)}$ assembles into a bifiltration
\[
\subdiv \colon \mathbb N^{\mathrm{op}}\times [0,\infty) \to \Simp,
\]
which we call the \emph{subdivision filtration of $\mathcal F$} \cite{sheehyMulticoverNerve2012a}.  For $X$ a metric space, we call $\srips(X)$ the \emph{subdivision-Rips bifiltration of $X$}.

\subsubsection{Size of Bipersistent Functors}\label{Sec:Size}
We now briefly review the definition of size of certain $\Simp$-valued functors from \cite{lesnickNerveModels2024}; see \cite[Section 2.3]{lesnickNerveModels2024} for additional background, motivation, and context.

Let $\mathbb K$ be a field and let $\Vec$ be the category of vector spaces over $\mathbb K$.  Given a poset $P$ and functor $\mathcal F\colon P\to \Simp$, the usual definition of a simplicial chain complex with coefficients in $\mathbb K$ extends pointwise to yield a chain complex 
\[\cdots \to \Ch{2}{\mathcal F}\to\Ch{1}{\mathcal F}\to \Ch{0}{\mathcal F}\]
of functors $\Ch{j}{\mathcal F}\colon P\to \Vect$.  We say that $\mathcal F$ \emph{finitely presented} if each $\Ch{j}\mathcal F$ is finitely presented.  In fact, whether $\mathcal F$ is finitely presented is independent of the choice of $\mathbb K$.  For example, if $X$ is finite, then both $\rips(X)$ and $\srips(X)$ are finitely presented.

We define the \emph{size} of a finitely presented functor $\mathcal F\colon \mathbb N^{\mathrm{op}}\times [0,\infty)\to \Simp$ to be 
\begin{equation}\label{Eq:Size}
    \beta_1(\Ch{0}{\mathcal F})+\sum_{j=0}^\infty \beta_0(\Ch{j}{\mathcal F}),
\end{equation}
where $\beta_0(\Ch{j}{\mathcal F})$ and $\beta_1(\Ch{j}{\mathcal F})$ are the number of generators and relations, respectively, in a minimal presentation of $\Ch{j}{\mathcal F}$.  In \cite[Corollary 3.6]{lesnickNerveModels2024}, we showed that if $\mathcal F$ is a finitely presented semifiltration, then $\beta_1(\Ch{j}{\mathcal F})\leq \beta_0(\Ch{j}{\mathcal F})$ for all $j$.  Thus, the term $\beta_1(\Ch{0}{\mathcal F})$ in \cref{Eq:Size} can be ignored in an asymptotic analysis of the size of a semifiltration. 
\subsubsection{Weak Equivalence} 
\label{Sec:Weak-Equivalence}

For $P$ a poset and functors $\mathcal F, \mathcal F': \cat{P}\to \Top$, a natural transformation $\eta:\mathcal F\to \mathcal F'$ is called an \emph{objectwise homotopy equivalence} if for each $p\in P$, the component map $\eta_p:\mathcal F_p\to \mathcal F'_p$ is a homotopy equivalence.  If such an $\eta$ exists, we write \[\mathcal F\xlongrightarrow{\simeq}\mathcal F'.\]  We say that $\mathcal F$ and $\mathcal F'$ are \emph{weakly equivalent}, and write $\mathcal F\simeq \mathcal F'$, if they are connected by a zigzag of objectwise homotopy equivalences, as follows:
\[
\begin{tikzcd}[ampersand replacement=\&,column sep=2ex,row sep=2ex]
   \& \mathcal W_1\ar["\simeq",swap]{dl}\ar["\simeq"]{dr}  \&           \& \cdots\ar["\simeq",swap]{dl}\ar["\simeq"]{dr}  \&                \&   \mathcal W_n \ar["\simeq",swap]{dl}\ar["\simeq"]{dr}  \\
\mathcal F \&                                                             \&  \mathcal W_2 \&                                                                                                                       \& \mathcal W_{n-1} \&                                                               \&\mathcal F'.
\end{tikzcd}
\]

\subsection{Interleavings}
\label{sec:Interleavings}
A category is called \emph{thin} if for any two objects $x$ and $y$, there is at most one morphism from $x$ to $y$. For $\eps \geq 0$, let  $I^{1+\eps}$ be the thin category with object set $[0,\infty)\times \{0,1\}$ and a morphism $(r,i)\to (s,j)$ if and only if either
\begin{enumerate}
    \item $r(1+\eps) \leq s$, or
    \item $i=j$ and $r \leq s$.
\end{enumerate}

We then have functors $E^0, E^1\colon [0,\infty)\to I^{1+\eps}$ mapping $r \in [0,\infty)$ to $(r,0)$ and $(r,1)$, respectively.  For any category $\mathbf C$ and functors $\mathcal F, \mathcal F':[0,\infty)\to \mathbf C$, a \emph{(multiplicative) $(1+\eps)$-interleaving} between $\mathcal F$ and $\mathcal F'$ is a functor 
    \[
    \mathcal{Z}: I^{1+\eps} \to \mathbf C
    \]
    such that 
    $  \mathcal{Z}\circ E^0 = \mathcal F$ and $\mathcal{Z}\circ E^1= \mathcal F'$.
If such a $\mathcal Z$ exists, we say $\mathcal F$ and $\mathcal F'$ are \emph{$(1+\epsilon)$-interleaved}.  

We now extend this definition to the $2$-parameter setting, as in \cite{buchetSparseHigher2023b,lesnickNerveModels2024}: For $\eps \geq 0$, let $I^{(1,1+\eps)}$ be the thin category with object set $\Poset\times \{0, 1\}$ and a morphism $(k,r,i)\to (l,s,j)$ if and only if either
\begin{enumerate}
    \item $(k,r(1+\eps)) \leq (l,s)$, or 
    \item $i=j$ and $(k,r) \leq (l,s)$.
\end{enumerate}
As above, we have functors $E^0, E^1: \Poset \to I^{(1,1+\eps)}$ sending $(k,r) \in \Poset$ to $(k,r,0)$ and $(k,r,1)$, respectively.

    For functors $\mathcal F, \mathcal F'\colon \Poset \to \mathbf C$, we define a $(1+\eps)$-interleaving between $\mathcal F$ and $\mathcal F'$ to be a functor 
    \[
      \mathcal{Z}: I^{(1,1+\eps)} \to \mathbf C
    \]
    such that $\mathcal{Z}\circ E^0 = \mathcal F$ and $  \mathcal{Z}\circ E^1 = \mathcal F'$.  
    
\begin{remark}
    As noted in \cite[Remark 2.8]{lesnickNerveModels2024}, this definition of $2$-parameter interleaving differs slightly from the standard definition introduced in \cite{lesnickTheoryInterleaving2015} and used, e.g., in \cite{blumbergStability2Parameter2022,botnan2024bottleneck,bakke2021stability,landi2018rank}: The definition of \cite{lesnickTheoryInterleaving2015} allows for shifts in the first coordinate, and the shifts in each coordinate are additive rather than multiplicative.  
\end{remark}

Following \cite{blumbergUniversalityHomotopy2023}, for functors $\mathcal F, \mathcal G: \Poset \to \Top$, we say that $\mathcal F$ and $\mathcal G$ are \emph{$(1+\eps)$-homotopy interleaved} if there exist $(1+\eps)$-interleaved  functors $\mathcal F^\prime, \mathcal G^\prime$ such that $\mathcal F \simeq \mathcal F^\prime$ and $\mathcal G\simeq \mathcal G^\prime$.  

\begin{definition}\label{Def:Approximation}
If $\mathcal F$ and $\mathcal G$ are $(1+\eps)$-homotopy interleaved, we say that  $\mathcal G$ is a \emph{$(1+\eps)$-approximation} to $\mathcal F$.
\end{definition}

\begin{lemma}[\cite{lesnickNerveModels2024}, Proposition 2.10]\label{Lem:Inerleaving_and_Subdivision}\sloppypar
For any $\epsilon \geq 0$, if two filtrations ${\mathcal F,\mathcal F': [0,\infty) \to \Simp}$ are $(1+\epsilon)$-interleaved, then so are $\mathcal{SF}$ and $\mathcal{SF'}$.
\end{lemma}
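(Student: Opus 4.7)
The plan is to obtain the desired $2$-parameter interleaving by applying the subdivision construction pointwise to a given $1$-parameter interleaving $\mathcal Z\colon I^{1+\epsilon}\to \Simp$ between $\mathcal F$ and $\mathcal F'$, and then reindexing.

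First, I would observe that $\mathcal S$ promotes to a functor $\Simp\to \Simp^{\mathbb N^{\mathrm{op}}}$: any simplicial map $f\colon L\to L'$ sends a flag $\sigma_1\subsetneq \cdots \subsetneq \sigma_m$ in $L$ to a chain $f(\sigma_1)\subseteq \cdots \subseteq f(\sigma_m)$ in $L'$, and on the subcategory of inclusions (which covers the structure maps of every filtration) the minimum-dimension condition defining $\mathcal S(L)_j$ is preserved, so $f$ restricts to a simplicial map $\mathcal S(L)_j\to \mathcal S(L')_j$ for every $j$, naturally in $j\in \mathbb N^{\mathrm{op}}$. Post-composing $\mathcal Z$ with $\mathcal S$ and uncurrying then yields a functor $\widetilde{\mathcal Z}\colon \mathbb N^{\mathrm{op}}\times I^{1+\epsilon}\to \Simp$.

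Second, I would verify that $\mathbb N^{\mathrm{op}}\times I^{1+\epsilon}$ and $I^{(1,1+\epsilon)}$ coincide as thin categories: both have object set $\mathbb N\times [0,\infty)\times \{0,1\}$, and in each, a morphism $(k,r,i)\to (l,s,j)$ exists if and only if $k\geq l$ and either $r(1+\epsilon)\leq s$, or $i=j$ and $r\leq s$; this follows by distributing the conjunction over the disjunction in the definition of $I^{(1,1+\epsilon)}$. Under this identification, restricting $\widetilde{\mathcal Z}$ along $E^0$ (resp.\ $E^1$) recovers $\mathcal{SF}$ (resp.\ $\mathcal{SF}'$), so $\widetilde{\mathcal Z}$ is the desired $(1+\epsilon)$-interleaving.

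The main technical point is the functoriality of $\mathcal S$, since a general simplicial map can collapse simplices and thereby decrease the minimum dimension of a flag. For the structure maps internal to $\mathcal F$ and $\mathcal F'$ this is automatic because they are inclusions by definition, and for the cross-maps in $\mathcal Z$ one can either argue that in the settings of interest the maps arise from vertex inclusions, or restrict $\mathcal S$ to the subcategory of simplex-injective simplicial maps on which $\mathcal S$ is manifestly functorial.
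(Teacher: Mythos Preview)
The paper does not supply its own proof of this lemma; it is quoted from the companion paper \cite{lesnickNerveModels2024}, so there is no in-paper argument to compare against directly.

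Your strategy is the natural one and is essentially correct: post-compose the interleaving $\mathcal Z\colon I^{1+\epsilon}\to \Simp$ with $\mathcal S$, uncurry, and identify $\mathbb N^{\mathrm{op}}\times I^{1+\epsilon}$ with $I^{(1,1+\epsilon)}$ (your check of this identification is fine). The one real gap is in your treatment of the functoriality of $\mathcal S$. You rightly flag that $\mathcal S(-)_j$ is not functorial on arbitrary simplicial maps, since a collapse can drop the minimum dimension of a flag below $j-1$. But neither of your proposed fixes closes the argument for the lemma as stated: appealing to ``the settings of interest'' proves only a special case, and restricting $\mathcal S$ to simplex-injective maps helps only if you show that the cross-maps of an arbitrary $\mathcal Z$ lie in that subcategory---which you do not.

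The missing observation is that they always do. For any interleaving $\mathcal Z$ between \emph{filtrations}, the composite of the two cross-maps
\[
\mathcal F_r \longrightarrow \mathcal F'_{r(1+\epsilon)} \longrightarrow \mathcal F_{r(1+\epsilon)^2}
\]
equals the structure map $\mathcal F_{r\subto r(1+\epsilon)^2}$, which is an inclusion by hypothesis. Hence the first cross-map is injective on vertices, and therefore dimension-preserving on every simplex; the same holds with $\mathcal F$ and $\mathcal F'$ interchanged. Once this is noted, $\mathcal S(-)_j$ is functorial on all maps appearing in $\mathcal Z$, and your uncurried $\widetilde{\mathcal Z}$ is the desired $(1+\epsilon)$-interleaving between $\mathcal{SF}$ and $\mathcal{SF}'$.
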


\subsection{Nerve Models of Subdivision Filtrations}\label{Sec:Nerve_Models}
To prove \Cref{Theo:Main-Theorem}, we will use the main theorem from \cite{lesnickNerveModels2024}.  We now recall the statement in the case of $[0,\infty)$-indexed filtrations.

\begin{notation}\label{Notation:m_k}
For $\mathcal F\colon [0,\infty)\to \Simp$ a filtration and $k\geq 0$, let $m_k=m_k(\mathcal F)$ denote the number of sets $M$ of simplices in  $\colim \mathcal F =\bigcup_{t\in [0,\infty)} \mathcal F_t$ such that 
\begin{itemize}
\item $|M|\leq (k+1)$ and
\item  for some $t\in [0,\infty)$, each $\sigma\in M$ is a maximal simplex in $\mathcal F_t$.
\end{itemize}
\end{notation}

\begin{theorem}[\cite{lesnickNerveModels2024}, Theorem 1.4]\mbox{}\label{Theo:Nerve-Model}
Given a finitely presented filtration $\mathcal F\colon [0,\infty)\to \Simp$, 
\begin{itemize}
\item[(i)] there exists a semifiltration $\mathcal{NF}\colon \N^{\mathrm{op}}\times [0,\infty)\to \Simp$ weakly equivalent to $\mathcal{SF}$ whose $k$-skeleton has size $O(m_k)$ for each $k\geq 0$,
\item[(ii)] any finitely presented functor $\mathcal G\colon \N^{\mathrm{op}}\times [0,\infty)\to \Simp$ weakly equivalent to $\mathcal{SF}$ has 0-skeleton of size at least $m_0$.
\end{itemize}
\end{theorem}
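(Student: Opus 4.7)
The plan is to prove (i) by a persistent nerve construction and (ii) by a lower bound on the persistent zeroth homology. For (i), fix a grade $(j, r) \in \Poset$. I would cover $\mathcal{SF}_{j,r}$ by the subcomplexes $U_\sigma$ indexed by the maximal simplices $\sigma$ of $\mathcal F_r$ of dimension at least $j - 1$, where $U_\sigma$ consists of all flags $\sigma_1 \subset \cdots \subset \sigma_m$ of sub-simplices of $\sigma$ satisfying $\dim \sigma_1 \geq j-1$. This covers $\mathcal{SF}_{j,r}$ because any flag extends upwards to some maximal simplex of $\mathcal F_r$. A nonempty intersection $U_{\sigma_0} \cap \cdots \cap U_{\sigma_k}$ consists of flags of sub-simplices of $\bigcap_i \sigma_i$ satisfying the dimension bound; it is nonempty exactly when $|\bigcap_i \sigma_i| \geq j$, and when nonempty it is a cone with apex $\bigcap_i \sigma_i$, hence contractible. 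The nerve lemma then gives $\mathrm{Nrv}(\{U_\sigma\}) \simeq \mathcal{SF}_{j,r}$. Define $\mathcal{NF}_{j,r}$ to be this nerve; its $k$-simplices are the $(k+1)$-subsets $M$ of maximal simplices of $\mathcal F_r$ of dimension $\geq j-1$ with $|\bigcap M| \geq j$. At fixed $r$, decreasing $j$ in $\mathbb{N}^{\mathrm{op}}$ enlarges both the vertex set and the simplex condition, so the structure maps in the $j$-direction are inclusions, giving a semifiltration.

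To make the construction functorial in $(j, r)$, I would use a persistent nerve / Mayer--Vietoris blow-up argument: the blow-up $\Blowup(\{U_\sigma\})$ is defined at each grade as a homotopy colimit of the cover, and its two natural projections give a zigzag $\mathcal{NF} \xleftarrow{\simeq} \Blowup \xrightarrow{\simeq} \mathcal{SF}$ of objectwise homotopy equivalences. For the size bound, each $\ell$-simplex of $\mathcal{NF}$ corresponds to an $(\ell+1)$-subset $M$ of maximal simplices that are simultaneously maximal in some $\mathcal F_r$. Its support in $\Poset$ is the ``quadrant'' $\{(j, r') : j \leq |\bigcap M|,\ b_M \leq r' < d_M\}$, where $b_M$ and $d_M$ are the earliest and first non-maximal times for $M$; this quadrant has a unique minimum, so $M$ contributes exactly one generator to $\Ch{\ell}{\mathcal{NF}}$. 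Summing over $\ell = 0, \ldots, k$ yields $\sum_{\ell = 0}^k \beta_0(\Ch{\ell}{\mathcal{NF}}) \leq m_k$, and combining with $\beta_1(\Ch{0}{\mathcal{NF}}) \leq \beta_0(\Ch{0}{\mathcal{NF}})$ from \cite[Corollary 3.6]{lesnickNerveModels2024} gives that the $k$-skeleton of $\mathcal{NF}$ has size $O(m_k)$.

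For part (ii), let $\sigma$ be any simplex in $\colim \mathcal F$ that is maximal at some grade, with $b_\sigma = \inf\{t : \sigma \in \mathcal F_t\}$ and $j_\sigma = \dim \sigma + 1$. At any grade $(j_\sigma, b_\sigma + \delta)$ with $\delta > 0$ sufficiently small, $\sigma$ remains maximal, and the vertex $\sigma$ is an isolated path component of $\mathcal{SF}_{j_\sigma, b_\sigma + \delta}$: every flag containing $\sigma$ would require its minimum to have dimension $\geq j_\sigma - 1 = \dim\sigma$, forcing the minimum to be $\sigma$ itself, and maximality of $\sigma$ prevents any strict extension upward. Each such $\sigma$ therefore contributes a distinct generator to $H_0(\mathcal{SF})$ born at $(j_\sigma, b_\sigma)$, proving $\beta_0(H_0(\mathcal{SF})) \geq m_0$. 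Since objectwise homotopy equivalences preserve pointwise homology, $H_0(\mathcal G) \cong H_0(\mathcal{SF})$; since $H_0(\mathcal G)$ is a quotient of $\Ch{0}{\mathcal G}$, we conclude $\beta_0(\Ch{0}{\mathcal G}) \geq \beta_0(H_0(\mathcal G)) \geq m_0$.

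The main obstacle is the functoriality in part (i). The indexing set of the cover (the maximal simplices of $\mathcal F_r$) changes non-monotonically with $r$---a simplex stops being maximal precisely when a strict superset appears---so the nerves at different grades do not connect by obvious simplicial maps. Formulating the blow-up so that its two projections remain objectwise equivalences under the full bipersistent structure, and transferring these equivalences into a genuine $\Simp$-valued semifiltration $\mathcal{NF}$, is the real technical content. Part (ii) is essentially routine once the isolated-vertex observation is made.
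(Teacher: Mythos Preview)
This theorem is not proven in the present paper; it is quoted from \cite{lesnickNerveModels2024}, and the paper only sketches the construction of $\mathcal{NF}$. Your proposal follows essentially the same route as that sketch: cover by maximal closed simplices, take the persistent nerve, and invoke a blow-up/functorial nerve theorem for the weak equivalence. Your description of $\mathcal{NF}_{(j,r)}$ agrees with the explicit one given in \cref{Sec:Nerve_Models}, and your size count via unique minimal generators of $\Ch{\ell}{\mathcal{NF}}$ is the right mechanism.

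The one substantive gap is exactly the point you flag as the ``main obstacle'': functoriality in the $r$-direction. You leave this unresolved, proposing only to ``formulate the blow-up'' appropriately. The paper's sketch (following \cite{lesnickNerveModels2024}) resolves it concretely and differently from what you suggest: one fixes a well-ordering on vertices of $\colim\mathcal F$, hence a lexicographic ordering on simplices, and for $r\leq s$ sends each maximal simplex $\sigma\in\mathcal F_r$ to the lexicographically least maximal simplex of $\mathcal F_s$ containing it. This yields genuine simplicial maps $\mathcal{NF}_{(j,r)}\to\mathcal{NF}_{(j,s)}$ directly, not merely a zigzag through a blow-up; the blow-up is used only to establish the weak equivalence with $\mathcal{SF}$, not to define $\mathcal{NF}$ itself. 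Without this (or an equivalent device) your $\mathcal{NF}$ is so far only a collection of complexes indexed by $\Poset$, not a functor.

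For part~(ii), your isolated-vertex argument is the right idea and is essentially how the lower bound is obtained in \cite{lesnickNerveModels2024}. One should be slightly more careful in passing from ``each such $\sigma$ gives an isolated component at $(j_\sigma,b_\sigma+\delta)$'' to ``$\beta_0(H_0(\mathcal{SF}))\geq m_0$'': distinct maximal simplices can share the same $(j_\sigma,b_\sigma)$, and one needs to argue they yield \emph{independent} minimal generators of the persistence module, not merely distinct elements at one grade. This is routine but deserves a sentence.
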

Our proof of \cref{Theo:Main-Theorem} will use only \cref{Theo:Nerve-Model}\,(i).  To construct $\mathcal{NF}$, we cover each $\mathcal F_t$ by its maximal closed simplices.  This induces a functorial cover of $\mathcal {SF}$. The functor $\mathcal{NF}$ is then defined as the \emph{persistent nerve} of this functorial cover, in the sense of \cite{bauerUnifiedView2023}; see \cite[Sections 2.6 and 3.1]{lesnickNerveModels2024}. 

In fact, we can give a more explicit description of $\mathcal{NF}$ (up to canonical isomorphism): $\mathcal{NF}_{(j,r)}$ is the abstract simplicial complex consisting of sets of maximal simplices in $\mathcal F_r$ whose intersection is a simplex of dimension at least $j-1$. To define the structure maps of $\mathcal{NF}_{(j,r)}$, we choose a well-ordering on the $0$-simplices of $\colim \mathcal F$.  This induces a lexicographic well-ordering on all simplices of $\colim \mathcal F$. Then for each $r\leq s$, we define $\mathcal{NF}_{(j,r)\subto (j,s)}(\sigma)$ to be the lexicographic minimum of the set of maximal simplices in $ \mathcal F_{s}$ containing $\sigma$.  Since $\mathcal{NF}$ is a semifiltration \cite[Lemma $3.4$]{lesnickNerveModels2024}, this data fully specifies the functor. 

\subsection{Doubling Dimension and Packing}\label{Sec:Doubling}
Doubling dimension is a notion of dimension for arbitrary metric spaces that plays an important role in parts of computational geometry and topology. In computational geometry, runtime bounds for algorithms on metric data often assume bounded doubling dimension \cite{har-peledFastConstruction2005, coleSearchingDynamic2006, chanReducingCurse2018, chanSmallHopdiameter2009, abrahamRoutingNetworks2006a}.  In applied topology, size bounds for sparse filtrations on metric spaces typically also require this assumption \cite{buchetEfficientRobust2016, sheehyLinearSizeApproximations2013a,cavanna2015geometric}.  See \cite{Hei01} for an analytic introduction to doubling dimension, and \cite{clarksonNearestNeighborSearching2006} for more on the role of doubling dimension and other notions of metric dimension in computer science.

Let $X=(X, \partial)$ be a metric space.  Recall that for $x\in X$ and $r\in [0,\infty)$, the \emph{closed ball} in $X$ of radius $r$ centered at $x$ is the set \[B(x)_r= \{y\in X\mid \partial(y,x)\leq r\}.\]
All balls we consider will be closed.
\begin{definition}[{\cite{GKL03,Ass83}}]
    The \emph{doubling dimension} of $X$ is the minimum value $\lambda$ such that every ball in $X$ can be covered by at most $2^\lambda$ balls of half the radius. 
\end{definition}

A straightforward volume argument shows that the doubling dimension of $\mathbb R^m$ is $\Theta(m)$; thus, doubling dimension generalizes the ordinary Euclidean dimension, in an asymptotic sense.

Metric spaces of finite doubling dimension satisfy a \emph{packing property}, which bounds the number of well-separated points contained in a ball:
\begin{lemma}[Packing Lemma \cite{eppsteinOptimalSpanners2022a, smidWeakGap2009a}]\label{Lem:Packing-Lemma}
    Suppose $X$ has doubling dimension $d$.  If $W\subset X$ is contained in a ball of radius $r_1$ and has minimum interpoint distance at least $r_2 > 0$, then 
    \[
    |W| \leq \left( \frac{4r_1}{r_2}\right)^d.
    \]
\end{lemma}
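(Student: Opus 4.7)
The plan is to apply the doubling property iteratively, producing a fine cover of the ball of radius $r_1$ by balls small enough that none can contain two distinct points of $W$.

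Concretely, fix a center $x \in X$ of a ball of radius $r_1$ containing $W$, and let $k$ be the smallest nonnegative integer with $r_1/2^k < r_2/2$, equivalently $2^k > 2r_1/r_2$. Starting from $B(x)_{r_1}$ and applying the definition of doubling dimension $k$ times in succession produces a cover of $B(x)_{r_1}$ by at most $(2^d)^k = 2^{dk}$ balls, each of radius $r_1/2^k$. Any two points lying in a common ball of this cover are within distance $2\,r_1/2^k < r_2$ of each other, contradicting the minimum interpoint distance hypothesis on $W$; hence each ball of the cover contains at most one point of $W$, and $|W| \leq 2^{dk}$.

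To convert this into the stated bound, the only nontrivial case is when $W$ has at least two distinct points. Two such points lie within a common ball of radius $r_1$ and are at distance at least $r_2$, forcing $r_1 \geq r_2/2$ and hence $k \geq 1$. By the minimality of $k$, one has $2^{k-1} \leq 2r_1/r_2$, so $2^k \leq 4r_1/r_2$, and therefore
\[
|W| \leq 2^{dk} = (2^k)^d \leq (4r_1/r_2)^d,
\]
as claimed.

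There is no genuine obstacle: the whole argument is a short, direct iteration of the doubling definition, with the cover produced in \emph{Step 1} and the arithmetic reduction carried out in \emph{Step 2}. The only point that needs any care is choosing $k$ as tightly as possible, which is what yields the prefactor $4$ in the stated bound rather than a looser constant such as $8$.
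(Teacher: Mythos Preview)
The paper does not give its own proof of this lemma; it is stated as a cited result from the literature. Your argument is precisely the standard one---iterate the doubling definition $k$ times until the covering balls have diameter below $r_2$, then pigeonhole---and it is correct, including the care taken with the minimality of $k$ to recover the constant $4$.

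One small remark: calling the case $|W|\leq 1$ ``trivial'' glosses over the fact that when $|W|=1$ and $4r_1<r_2$, the right-hand side $(4r_1/r_2)^d$ is strictly less than $1$, so the stated inequality would actually fail. This is a degeneracy of the lemma's phrasing (a singleton has no interpoint distances, so the hypothesis places no constraint on $r_2$) rather than a flaw in your method, and indeed the paper's sole application of the lemma (in the proof of \cref{Lem:Persistent-Cliques}) handles the case $|W|=1$ separately. It would nonetheless be cleaner to say explicitly that the bound is asserted only for $|W|\geq 2$, where your argument shows $r_2\leq 2r_1$ and hence $4r_1/r_2\geq 2>1$.
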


\subsection{Well-Separated Pair Decompositions}\label{Sec:WSPDs}
\emph{Well-separated pair decompositions (WSPDs)} are tools from computational geometry for approximately representing finite metric spaces in a space-efficient way.  WSPDs were first introduced for Euclidean point sets in \cite{callahan1995decomposition}, and later generalized to finite metric spaces of bounded doubling dimension in \cite{talwar2004bypassing}.

Let  $X=(X,\partial)$ be a finite metric space.  For non-empty subsets $U,V\subset X$, let
\begin{align*}
U\otimes V&=\{\{u,v\}\mid u\in U, v\in V\},\\
\partial(U,V)&=\min_{u\in U,\,v\in V}\partial (u,v).
\end{align*}

\begin{definition}[\cite{callahan1995decomposition,talwar2004bypassing}]
For $s >0$, an \emph{$s$-well separated pair decomposition ($s$-WSPD)} of $X$  is a set of pairs of non-empty sets  $\{\{U_1,V_1\},\dots,\{U_z,V_z \}\}$ such that 
\begin{enumerate}
\item $U_i,V_i\subset X$ for all $i$,
\item $U_i\cap V_i=\emptyset$ for all $i$,
\item $(U_i\otimes V_i) \cap (U_j\otimes V_j)=\emptyset$ for all $i\ne j$,
\item $\bigcup_{i=1}^z U_i\otimes V_i= X\otimes X$,
\item $\partial(U_i,V_i)\geq s \cdot \max(\diam U_i,\diam V_i)$ for all $i$.
\end{enumerate}
\end{definition}

\begin{theorem}[{\cite[Lemma $5.1$]{har-peledFastConstruction2005}}]\label{Thm:WSPD}
If $X$ has doubling dimension $d$, then for each $s\in [1,\infty)$, then there exists an $s$-WSPD of $X$ with $O(|X|s^d)$ pairs.
\end{theorem}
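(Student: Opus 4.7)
The plan is to follow the standard hierarchical approach to WSPD construction: build a net tree for $X$, then extract the pairs by a recursive refinement procedure that stops as soon as a pair of subtrees becomes well-separated. This mirrors the original Callahan--Kosaraju construction for Euclidean point sets, using net trees in place of compressed quadtrees to accommodate general doubling metrics.

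First, I would construct a net tree on $X$. Setting $i_{\max}=\lceil \log_2 \diam(X) \rceil$, for each level $i\in\{0,\ldots,i_{\max}\}$ I pick $N_i\subset X$ with $N_0=X$, $N_{i+1}\subset N_i$, covering (every point of $X$ lies within $2^i$ of some point of $N_i$), and packing (distinct points of $N_i$ are at distance at least $2^i$). Each $p\in N_i$ becomes a tree node $u$ at level $i$ with representative $\mathrm{rep}(u)=p$, and every $p\in N_i\setminus N_{i+1}$ is attached to a parent in $N_{i+1}$ lying within distance $2^{i+1}$. A geometric series shows that the leaf set $\mathrm{desc}(u)\subset X$ is contained in the ball $B(\mathrm{rep}(u))_{c\cdot 2^i}$ for an absolute constant $c$. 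Contracting levels at which the net does not change reduces the tree to size $O(|X|)$.

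Second, I would extract WSPD pairs by a recursive split: starting with $(\mathrm{root},\mathrm{root})$ on a queue, for each pair $(u,v)$ with $i_u\geq i_v$, test whether $u\neq v$ and $\partial(\mathrm{rep}(u),\mathrm{rep}(v))\geq (s+2c)\cdot 2^{i_u}$; if so, emit $\{\mathrm{desc}(u),\mathrm{desc}(v)\}$, and otherwise replace $u$ by each of its children and enqueue the resulting pairs. The threshold is tuned so that each emitted pair satisfies condition (5) of the WSPD definition, since $\mathrm{desc}(u)$ and $\mathrm{desc}(v)$ each have diameter at most $2c\cdot 2^{i_u}$. Conditions (1)--(4) follow from a standard induction showing that each unordered pair $\{x,y\}\subset X$ is captured exactly once, namely at the shallowest pair of ancestors of $x$ and $y$ that meets the separation test.

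Finally, the size bound comes from charging emissions to tree nodes and applying the Packing Lemma. Fix a node $u$ at level $i$; any $v$ ever paired with $u$ during the recursion must descend from a node whose level is at most $i+O(1)$ and whose representative lies within $O(s\cdot 2^i)$ of $\mathrm{rep}(u)$, since otherwise the parent pair would already have been emitted without splitting $u$. The set of such representatives lies in a ball of radius $O(s\cdot 2^i)$ and, being drawn from nets at levels $\geq i-O(1)$, has pairwise distances $\Omega(2^i)$; by \cref{Lem:Packing-Lemma} its cardinality is $O(s^d)$. Summing over the $O(|X|)$ nodes of the net tree yields the stated $O(|X|s^d)$ bound. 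The main obstacle is calibrating the separation threshold and the recursion so that the packing lemma applies with uniform constants in the presence of level mismatches $i_u\neq i_v$; this is the technical heart of Har-Peled and Mendel's argument.
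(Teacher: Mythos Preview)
The paper does not give its own proof of this statement: \cref{Thm:WSPD} is stated as a citation of \cite[Lemma~5.1]{har-peledFastConstruction2005} and used as a black box, so there is no ``paper's proof'' to compare against. Your sketch is a faithful outline of the Har-Peled--Mendel argument from the cited reference (net tree plus recursive splitting, with the pair count bounded via the packing lemma), and is correct at the level of detail you give.
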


\cref{Thm:WSPD} extends a result of \cite{callahan1995decomposition} from Euclidean point sets to doubling metrics, and also strengthens a result of \cite{talwar2004bypassing} by eliminating a factor in the bound depending on the spread of the metric space. 

It follows from \cref{Thm:WSPD} that finite metrics of bounded doubling dimension admit approximations with only linearly many distinct distances:
\begin{corollary}\label{Cor:WSPD_Linearly_Many_Distinct_Distances}
For fixed $\epsilon>0$ and  $X$ of bounded doubling dimension, there exists $\partial'\colon X\times X\to [0,\infty)$ such that
\begin{enumerate}
\item 
 $\im \partial'\subset \im \partial$, 
 \item 
  $|\mathop{\mathrm{im}} \partial'|=O(|X|)$, and
  \item  $\partial'(x,y)\leq \partial(x,y)\leq (1+\epsilon)\partial'(x,y)$ for all $x,y\in X$.
  \end{enumerate}
\end{corollary}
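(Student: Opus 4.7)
The plan is to derive this corollary directly from \cref{Thm:WSPD} by applying it with an appropriate choice of the well-separation parameter $s$, and defining $\partial'$ to take a single (small) value on each pair of the resulting WSPD.

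More precisely, I would fix $s = 2/\epsilon$ (any $s \geq 2/\epsilon$ works) and invoke \cref{Thm:WSPD} to obtain an $s$-WSPD $\{\{U_1,V_1\},\ldots,\{U_z,V_z\}\}$ of $X$ with $z = O(|X| s^d) = O(|X|)$, since $s$ and the doubling dimension $d$ are constants under our assumption. For each $i$, choose representatives $\bar u_i \in U_i$, $\bar v_i \in V_i$ realizing $\partial(U_i,V_i)$, and set $r_i = \partial(\bar u_i, \bar v_i)$. Define $\partial'(x,x)=0$ for all $x \in X$, and for $x\neq y$, define $\partial'(x,y) = r_i$, where $i$ is the unique index with $\{x,y\} \in U_i \otimes V_i$ (well-defined by WSPD properties (3) and (4)).

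Properties (1) and (2) of the corollary are then immediate: each value of $\partial'$ is either $0$ or one of the $z = O(|X|)$ distances $r_i$, all realized by pairs in $X$. The core of the proof is property (3). On the one hand, $\partial'(x,y) = \partial(U_i,V_i) \leq \partial(x,y)$ by definition. On the other hand, the key estimate is obtained from the triangle inequality and the WSPD separation condition: for any $x \in U_i$, $y \in V_i$,
\[
\partial(x,y) \;\leq\; \partial(x,\bar u_i) + \partial(\bar u_i,\bar v_i) + \partial(\bar v_i,y) \;\leq\; \diam U_i + r_i + \diam V_i \;\leq\; r_i + \tfrac{2}{s}\,\partial(U_i,V_i) \;=\; (1+\tfrac{2}{s})\,r_i,
\]
using $\partial(U_i,V_i) \geq s \cdot \max(\diam U_i, \diam V_i)$ and $\partial(U_i,V_i) \leq r_i$. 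Since $s = 2/\epsilon$, this gives $\partial(x,y) \leq (1+\epsilon)\,\partial'(x,y)$, as required.

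There is no substantial obstacle here — the argument is a routine consequence of \cref{Thm:WSPD}. The only mild point to be careful about is handling the diagonal (which does not appear in the WSPD) by setting $\partial'(x,x) = 0$ separately, and checking that the off-diagonal pairs are covered by exactly one $U_i \otimes V_i$ so that $\partial'$ is unambiguously defined.
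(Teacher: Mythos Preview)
Your argument is correct and essentially identical to the paper's: the paper also applies \cref{Thm:WSPD} with $s=2/\epsilon$, sets $\partial'(x,y)=\partial(U_i,V_i)$ on the unique pair containing $\{x,y\}$, and verifies (3) via the same triangle-inequality estimate. The only minor point you gloss over is that \cref{Thm:WSPD} requires $s\geq 1$, so when $\epsilon>2$ one should either take $s=\max(1,2/\epsilon)$ (as your parenthetical allows) or, as the paper does, reduce without loss of generality to $\epsilon\leq 2$.
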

\begin{proof}
Note that if the result holds for $\epsilon$, then it holds for each $\epsilon'>\epsilon$.  Therefore, we may assume without loss of generality that $\epsilon\leq 2$.   Let \[\{\{U_1,V_1\},\ldots \{U_z,V_z\}\}\] be a $(2/\epsilon)$-WSPD of $X$ of size $O(|X|)$, which exists by \cref{Thm:WSPD}.  For $x\in X$, let $\partial'(x,x)=0$.  For $x\ne y$ in $X$, let $U_i,V_i$ be the unique pair in the WSPD with $\{x,y\} \in U_i\otimes V_i$, and let $\partial'(x,y)=\partial(U_i,V_i)$. 

 It is clear that  $\im \partial'\subset \im \partial$.  We have $|\im \partial'|=O(|X|)$ because the WSPD has $O(|X|)$ pairs.  Clearly, $\partial'(x,y)\leq \partial(x,y)$.  To see that $\partial(x,y)\leq (1+\epsilon)\partial'(x,y)$, write \[D=\max(\diam U_i,\diam V_i).\]
Choose $u\in U_i$ and $v\in V_i$ such that $\partial(u,v)=\partial(U_i,V_i)$, and assume without loss of generality that $x\in U_i$ and $y\in V_i$.  We have 
\begin{align*}
\partial(x,y)&\leq \partial(x,u)+\partial(u,v)+\partial(y,v)\\
&\leq 2D+\partial(U_i,V_i)\\
&\leq  (1+\epsilon)\partial(U_i,V_i)\\
&= (1+\epsilon)\partial'(x,y),
\end{align*}
where the third inequality follows from property (5) of a WSPD.
\end{proof}

\section{Proof of Theorem \ref{Theo:Main-Theorem}}\label{Sec:Proof_Main}
We henceforth assume that $\epsilon\in (0,\infty)$ is fixed and that $X=(X,\partial)$ is a finite metric space of doubling dimension at most a constant $d$.  As mentioned in the introduction, our strategy for proving \cref{Theo:Main-Theorem} is to approximate $\rips(X)$ by a filtration $\mathcal A(X)$ with few maximal simplices at each index, and then apply \cref{Theo:Nerve-Model}\,(i).

\subsection{Approximate Containment by Small Sets of Simplices}\label{Sec:Small_Containing_Sets_of_Cliques}
Our construction of  $\mathcal A(X)$ will hinge on the following technical lemma.

\begin{lemma}\label{Lem:Persistent-Cliques}
    For any $x\in X$ and $r \in (0, \infty)$, there exists a set $S(x,r)$ of simplices in $\mathcal{R}(X)_{r(1+\epsilon)}$, each containing $x$, such that
    \begin{enumerate}
        \item $|S(x,r)| = O(1)$,
        \item  for any simplex $\sigma \in \mathcal{R}(X)_r$ with $x\in \sigma$, we have $\sigma \subset \tau$ for some $\tau \in S(x,r)$,
    \item for $r'\geq r(1+\epsilon)$ and $\sigma\in S(x,r)$, we have  $\sigma \subset \tau$ for some $\tau \in S(x,r')$.
    \end{enumerate}
\end{lemma}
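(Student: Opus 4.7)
The plan is to build $S(x, r)$ by covering the ``star'' of $x$ at scale $r$ using a small net in $B(x)_{2r}$. Fix $x \in X$ and $r > 0$, and set $\alpha = r\epsilon/2$. Let $N \subset B(x)_{2r}$ be a maximal $\alpha$-packing containing $x$, i.e., a maximal subset whose points are pairwise at distance $\geq \alpha$. Since $X$ has doubling dimension $d$, the Packing Lemma (\cref{Lem:Packing-Lemma}) gives $|N| \leq (16/\epsilon)^d = O(1)$, and maximality ensures that $N$ is also an $\alpha$-net of $B(x)_{2r}$. For each subset $N' \subset N$ containing $x$, put
\[
\tau(N') = \bigcup_{y \in N'} B(y)_\alpha,
\]
and let $S(x, r)$ consist of those $\tau(N')$ whose diameter is at most $2r(1 + \epsilon)$.

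Property (1) is then immediate from $|S(x, r)| \leq 2^{|N|} = O(1)$. For property (2), given $\sigma \in \mathcal{R}(X)_r$ with $x \in \sigma$, the plan is to assign to each $z \in \sigma$ a net point $y_z \in N$ with $\partial(z, y_z) \leq \alpha$ (choosing $y_x = x$, which is possible since $x \in N$) and set $N' = \{y_z : z \in \sigma\}$. Two applications of the triangle inequality then bound
\[
\diam(N') \leq \diam(\sigma) + 2\alpha \leq 2r + 2\alpha, \qquad \diam(\tau(N')) \leq 2\alpha + \diam(N') \leq 2r + 4\alpha = 2r(1+\epsilon),
\]
so $\tau(N') \in S(x, r)$; by construction each $z \in \sigma$ lies in $B(y_z)_\alpha \subset \tau(N')$, giving $\sigma \subset \tau(N')$. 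Property (3) then follows formally: any $\sigma \in S(x, r)$ is a simplex of $\mathcal{R}(X)_{r(1+\epsilon)} \subset \mathcal{R}(X)_{r'}$ containing $x$, so applying property (2) at scale $r'$ to $\sigma$ yields the required $\tau \in S(x, r')$ with $\sigma \subset \tau$.

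The only real subtlety is the choice of $\alpha$: the net rounding introduces slack both when passing from $\sigma$ to $N'$ and again when passing from $N'$ to $\tau(N')$, so the total diameter inflation from $2r$ to $\diam(\tau(N'))$ is $4\alpha$, forcing $\alpha \leq r\epsilon/2$; on the other hand, $\alpha$ must be a constant fraction of $r\epsilon$ to keep the packing bound $O((r/\alpha)^d)$ independent of $r$ and $|X|$. The choice $\alpha = r\epsilon/2$ hits both constraints simultaneously and is essentially the only computation in the argument.
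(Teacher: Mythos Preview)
Your proof is correct and follows essentially the same approach as the paper: both take an $(r\epsilon/2)$-net $N$ of $B(x)_{2r}$ containing $x$, define $S(x,r)$ as $\alpha$-thickenings of suitable subsets of $N$, and verify properties (2) and (3) by the same triangle-inequality estimates. The only cosmetic differences are that the paper thickens \emph{maximal} cliques of $\mathcal R(N)_{r(1+\epsilon/2)}$ (restricted to $B(x)_{2r}$) rather than all subsets with bounded thickened diameter, and it separately disposes of the case $|N|=1$ so that the Packing Lemma bound $(16/\epsilon)^d$ is only invoked when $\epsilon\leq 8$; your direct invocation of $|N|\leq(16/\epsilon)^d$ is literally false for large $\epsilon$, though the conclusion $|N|=O(1)$ is of course still correct.
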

Our  proof of \Cref{Lem:Persistent-Cliques} will use the following definition:

\begin{definition}\label{Def:Packing}
    For $\alpha \in [0, \infty)$, an \emph{$\alpha$-packing} of a finite metric space $Y=(Y,\partial_{\,Y})$  is a subset $W\subset Y$ such that
\begin{enumerate}
    \item for every $y\in Y$, there exists $w \in W$ with $\partial_{\,Y}(y,w) \leq \alpha$
    \item for every pair of distinct elements $w, w' \in W$, we have $\partial_{\,Y}(w,w') \geq \alpha$.
\end{enumerate}
\end{definition}

\begin{remark}\label{Rem:Greedy_Packing_Alg}
Given a point $y\in Y$, a simple greedy algorithm computes an $\alpha$-packing $W\subset Y$ containing $y$ in time $O(|Y||W|)$: Initialize $W=\{y\}$, and iterate through the remaining elements of $Y$ in arbitrary order, adding each element $z\in Y$ to $W$ if and only if $\partial_{\,Y}(z,W) > \alpha$. 
\end{remark}

\begin{proof}[Proof of \Cref{Lem:Persistent-Cliques}]
    Let $W$ be an $(r\epsilon/2)$-packing of $B(x)_{2r}$ containing $x$.  Let $\Gamma$ denote the set of simplices of $\mathcal R(W)_{r(1+\epsilon/2)}$ that are maximal and contain $x$.  For each $\sigma\in \Gamma$, let \[\overline  \sigma=\{y\in B(x)_{2r} \mid \partial(y,\sigma)\leq r\epsilon/2\}.\]
   Observe that $\overline  \sigma\in \mathcal R(X)_{r(1+\epsilon)}$: If $y,z\in \overline  \sigma$, then $\partial(y,v)\leq r\epsilon/2$ and $\partial(z,w)\leq r\epsilon/2$ for some $v,w\in \sigma$, so by the triangle inequality, \[\partial(y,z)\leq \partial(y,v)+\partial(v,w)+\partial(w,z)\leq 2r(1+\epsilon/2)+r\epsilon=2r(1+\epsilon).\]
   Let  \[S(x,r)=\{\overline  \sigma\mid \sigma\in \Gamma\}.\] 
  
  Note that if $|W|=1$, then $|S(x,r)|=1$.  Therefore, to check that $|S(x,r)|=O(1)$, it suffices to consider the case $|W|>1$.  Then, since distinct points in $B(x)_{2r}$ have distance at most $4r$, we have $r\epsilon/2\leq 4r$, which implies that  $\epsilon\leq 8<16$.  Hence, letting $d'\leq d$ denote the doubling dimension of $X$, \Cref{Lem:Packing-Lemma} gives that 
  \[|W|\leq (16/\epsilon)^{d'}\leq (16/\epsilon)^d=O(1).\]
Therefore, 
 \[|S(x,r)|\leq |\Gamma|\leq 2^{|W|}\leq 2^{(16/\epsilon)^{d}}=O(1).\]  
Thus, $S(x,r)$ satisfies (1).  

To see that $S(x,r)$ satisfies (2), consider $\sigma \in \mathcal{R}(X)_r$ with $x\in \sigma$, and let
    \[
    \omega = \{ w\in W \mid \partial(w,\sigma) \leq r\epsilon/2\}.
    \]
    Then by the triangle inequality, $x\in \omega\in \mathcal R(W)_{r(1+\epsilon/2)}$.  Thus, $\omega$ is contained in some $\nu\in \Gamma$.  Since $\sigma\subset B(x)_{2r}$ and $W$ is an $(r\epsilon/2)$-packing  of $B(x)_{2r}$, we have $\sigma \subset \overline  \nu\in S(x,r)$.  This establishes (2).  

To verify (3), note that for each $\sigma\in S(x,r)$, we have \[x\in \sigma\in \mathcal{R}(X)_{r(1+\epsilon)}\subset  \mathcal{R}(X)_{r'}.\]  Therefore, $\sigma\subset \tau$ for some $\tau\in S(x,r')$.  
\end{proof}

\subsection{Construction of the Approximating Filtration \texorpdfstring{$\mathcal{A}(X)$}{A(X)}}

\subsubsection{A Simplified Construction}
Since our construction of $\mathcal A(X)$ is somewhat technical, to aid understanding, we first give a simplified version that suffices to prove a weaker form of \cref{Theo:Main-Theorem}.  

Recall that the \emph{birth index} $b_\sigma$ of a simplex $\sigma \in \colim \rips(X)$ is defined as \[b_{\sigma}=\min\,\{ r \in [0,\infty) \mid \sigma\in \rips(X)_r\}.\] 
We can use \cref{Lem:Persistent-Cliques} in a straightforward way to define an approximation $\mathcal A'(X)$ of $\mathcal R(X)$, as follows: Let $E$ be the set of all edges in $\colim \rips(X)$.  For each $e\in E$, choose a vertex $x_e$ incident to $e$,  and a set of simplices $S(x_e,b_e)$ as in \cref{Lem:Persistent-Cliques}.  Given a set of simplices $Z$ in $\colim \rips(X)$, let $\overline{Z}$ denote the simplicial complex consisting of the simplices of $Z$ and all of their faces.
We define the filtration $\mathcal A'(X)\colon [0,\infty)\to \Simp$ by \[\mathcal A'(X)_r=X\cup \left(\bigcup_{\left\{e\in E\,\mid\, b_e\leq r\right\}}  \overline{\mathcal S(x_e,b_e)}\right).\]

\cref{Lem:Persistent-Cliques}\,(1) implies that $\mathcal A'(X)$ has $O(|X|^{2})$ distinct maximal simplices across all indices, which yields the naive bound $m_k(A'(X))=O(|X|^{2(k+1)})$ for all $k\geq 0$ (see \cref{Notation:m_k}).  Given this, a simplification of our proof of \cref{Theo:Main-Theorem} below gives that $\mathcal {NA'}(X)$ is a $(1+\eps)$-approximation to $\srips(X)$ whose $k$-skeleton has size $O(|X|^{2(k+1)})$.  

To achieve the stronger size bound of \cref{Theo:Main-Theorem}, we need finer control over the number of maximal simplices at individual indices.  To this end, in our definition of $\mathcal A(X)$, we will take the values of $r$ at which we select the sets $S(x_e,r)$ to be separated by a factor of $1+\epsilon$, so that we can apply \cref{Lem:Persistent-Cliques}\,(3).  

\subsubsection{Definition of $\mathcal A(X)$}\label{Sec:Def_of_A(X)}
If $|X|\leq 1$, we define $\mathcal A(X)=\mathcal R(X)$.  Now assume $|X|>1$.  Let \[R=\{b_e\mid e\in E\}.\]
Define \[Q=\{q_1,\ldots,q_n\}\subset (0,\infty)\] inductively as follows: Let $q_1=\min R$.  Assuming $q_i$ has been defined, let 
\[
R^i=\{r\in R \mid r> q_{i}\}.
\]  
If $R^i\ne \emptyset$, then define 
\[
q_{i+1}=\max\,\{\min R^i,q_i(1+\epsilon)\}.
\]  
If $R^i= \emptyset$, then $q_i=q_n$ is the largest element of $Q$.   

\begin{lemma}\mbox{}\label{Lem:q_i_properties}
\begin{itemize}
\item[(i)] For each $i<n$, we have $q_i(1+\epsilon)\leq q_{i+1}$.
\item[(ii)] For each $r\in R$, there exists $q_i\in Q$ such that $r\leq q_i \leq r(1+\epsilon)$.
\end{itemize}  
\end{lemma}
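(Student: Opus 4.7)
The plan is to prove (i) directly from the recursive definition of $Q$, and then establish (ii) by choosing the right index and performing a short case analysis on which term of the maximum defines $q_{i+1}$.

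For (i), I would simply unpack the recursion: for $i < n$ we have $R^i \neq \emptyset$, and so by definition $q_{i+1} = \max\,\{\min R^i,\, q_i(1+\epsilon)\} \geq q_i(1+\epsilon)$. This is immediate.

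For (ii), the idea is to let $i$ be the largest index with $q_i \leq r(1+\epsilon)$. Such an $i$ exists because $q_1 = \min R \leq r \leq r(1+\epsilon)$. By construction $q_i \leq r(1+\epsilon)$, so the only thing left is to show $q_i \geq r$. I would split into two cases. First, if $i = n$, then $R^n = \emptyset$, which by the definition means no element of $R$ strictly exceeds $q_n$; in particular $r \leq q_n = q_i$. Second, if $i < n$, then by maximality of $i$ we have $q_{i+1} > r(1+\epsilon)$, and I would examine the two possibilities for $q_{i+1} = \max\,\{\min R^i,\, q_i(1+\epsilon)\}$. If $q_{i+1} = q_i(1+\epsilon)$, then $q_i(1+\epsilon) > r(1+\epsilon)$ gives $q_i > r$. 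If instead $q_{i+1} = \min R^i$, then $\min R^i > r(1+\epsilon) \geq r$; were $r > q_i$, we would have $r \in R^i$ and hence $r \geq \min R^i > r$, a contradiction, so $r \leq q_i$.

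There is no genuine obstacle here: both parts follow from careful bookkeeping with the recursive definition. The only thing to watch is that the maximality argument in (ii) handles the terminal index $i = n$ separately, since the recurrence for $q_{i+1}$ is not available there; the definition of $R^i$ supplies the needed replacement.
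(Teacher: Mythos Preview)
Your proof is correct. For (ii) you take the dual entry point to the paper's argument: the paper chooses $q_i=\min\{q\in Q\mid r\leq q\}$ and shows $q_i\leq r(1+\epsilon)$ by contradiction, whereas you choose the largest $q_i\leq r(1+\epsilon)$ and show $r\leq q_i$; the underlying mechanism (using which branch of the $\max$ defines the next term to force a contradiction via membership in $R^i$) is the same.
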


\begin{proof}
Statement (i) follows directly from the definition of $Q$.   To prove (ii), let \[q_i=\min\, \{q\in Q\mid r\leq q\}.\] 
Note that $q_i$ is well-defined, since by construction it is the minimum of a non-empty set. If $r =q_i$ then the result clearly holds, so assume $r<q_i$, in which case $i>1$ and $ q_{i-1}<r$.  If $ r(1+\epsilon)<q_i$, then $q_{i-1}(1+\epsilon)<q_{i}$, which implies that $q_i=\min R^{i-1}$. This is a contradiction since $r\in R^{i-1}$ and $r<q_i$.  Thus, $r\leq q_i \leq r(1+\epsilon)$.  
\end{proof}

\begin{proposition}\label{Prop:|Q|=O(|X|)}
We have $|Q|=O(|X|)$.
\end{proposition}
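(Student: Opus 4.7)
The plan is to leverage the sparse distance approximation provided by \cref{Cor:WSPD_Linearly_Many_Distinct_Distances}, together with the multiplicative gap property in \cref{Lem:q_i_properties}(i). The intuition is that each $q_j$ must sit within a factor of $(1+\epsilon)$ of some birth index in $R$, and each birth index in $R$ in turn sits within a factor of $(1+\epsilon)$ of some value in a sparse set of size $O(|X|)$. Since consecutive elements of $Q$ are spread apart by at least a factor of $(1+\epsilon)$, only a bounded number of $q_j$'s can cluster near any single value in the sparse set.

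First, I would apply \cref{Cor:WSPD_Linearly_Many_Distinct_Distances} to obtain $\partial' \colon X \times X \to [0,\infty)$ with $\partial'(x,y) \leq \partial(x,y) \leq (1+\epsilon)\partial'(x,y)$ and $|\im \partial'| = O(|X|)$, and define $R' = \{\partial'(x,y)/2 \mid x \ne y\}$, so that $|R'| = O(|X|)$.

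Next, I would verify by a short case analysis that every $q_j \in Q$ satisfies $r \leq q_j \leq r(1+\epsilon)$ for some $r \in R$. For $q_1 = \min R$, take $r = q_1$. For $j \geq 2$, either $q_j = \min R^{j-1} \in R$, in which case set $r = q_j$, or $q_j = q_{j-1}(1+\epsilon) > \min R^{j-1}$, in which case $r := \min R^{j-1}$ satisfies $q_{j-1} < r < q_j = q_{j-1}(1+\epsilon) < r(1+\epsilon)$. Chaining this with $r' \leq r \leq (1+\epsilon) r'$ for some $r' \in R'$ (from the defining property of $\partial'$) shows that each $q_j$ lies in an interval of the form $[r', r'(1+\epsilon)^2]$ with $r' \in R'$.

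Finally, \cref{Lem:q_i_properties}(i) forces at most three elements of $Q$ into any such interval: four elements $q_{j_1} < q_{j_2} < q_{j_3} < q_{j_4}$ in $[r', r'(1+\epsilon)^2]$ would give $(1+\epsilon)^3 \leq q_{j_4}/q_{j_1} \leq (1+\epsilon)^2$, a contradiction. Summing over the $|R'| = O(|X|)$ intervals yields $|Q| \leq 3|R'| = O(|X|)$. No step appears to pose a significant obstacle; the crucial ingredient is that the bounded-doubling-dimension hypothesis, via \cref{Cor:WSPD_Linearly_Many_Distinct_Distances}, lets us replace the potentially quadratic-sized set $R$ of birth indices by the linear-sized set $R'$.
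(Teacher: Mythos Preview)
Your proposal is correct and follows essentially the same approach as the paper's proof: both use \cref{Cor:WSPD_Linearly_Many_Distinct_Distances} to produce a sparse set $R'$ of size $O(|X|)$, show that each $q_j$ lies in an interval $[r',\,r'(1+\epsilon)^2]$ for some $r'\in R'$, and then invoke \cref{Lem:q_i_properties}(i) to bound the number of $q_j$'s in each such interval by three. Your version is slightly more explicit in spelling out the case analysis and the pigeonhole contradiction, and your scaling of $R'$ by $1/2$ matches the definition of $R$ more carefully, but the argument is the same.
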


\begin{proof}
For $\partial'\colon X\times X\to [0,\infty)$ as in \cref{Cor:WSPD_Linearly_Many_Distinct_Distances}, let $R'=\im \partial'\setminus \{0\}$.  Note that
\begin{enumerate}
\item  $R'\subset R$, 
\item $|R'|=O(|X|)$, and
\item  for each $r\in R$, there exists $r'\in R'$ with $r'\leq r\leq r'(1+\epsilon)$. 
\end{enumerate}
 By the definition of $Q$, for each $q\in Q$ there exists $r\in R$ with $r\leq q\leq r(1+\epsilon)$.  Thus, there exists $r'\in R'$ with 
\begin{equation}\label{q_and_r'}
r'\leq q\leq r'(1+\epsilon)^2.
\end{equation}
  Moreover, by \cref{Lem:q_i_properties}\,(i), for each $r'\in R$, there can be at most three elements of $Q$ which satisfy \cref{q_and_r'}.  Thus $|Q|\leq 3|R'|=O(|X|)$.  
\end{proof}

Let $q_0=0$.  For $i\in \{1,\ldots,n\}$, let \[E_i=\left\{e\in E\mid b_e\in (q_{i-1},q_i]\right\}.\]  For each $e\in E_i$, choose a vertex $x_e$ incident to $e$, and a set $S(x_e,q_i)$ as in \cref{Lem:Persistent-Cliques}.  We will say that $x_e$ is \emph{chosen at }$q_i$.  Let \[S(q_i)=\bigcup_{e\in E_i} S(x_e,q_i).\]   
We define the filtration $\mathcal A(X)\colon [0,\infty)\to \Simp$ by \[\mathcal A(X)_r=X\cup \left(\bigcup_{q_i\leq r} \overline{S(q_i)}\right).\]

\subsection{Properties of \texorpdfstring{$\mathcal A(X)$}{A(X)}}

\begin{proposition}\label{prop:interleaved}
The filtrations $\mathcal A(X)$ and $\rips(X)$ are $(1+\epsilon)$-interleaved.  
\end{proposition}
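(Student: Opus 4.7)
The plan is to construct a functor $\mathcal Z\colon I^{1+\epsilon}\to \Simp$ with $\mathcal Z\circ E^0=\rips(X)$ and $\mathcal Z\circ E^1=\mathcal A(X)$ in which every morphism acts as the inclusion of one simplicial complex into another.  Since $\rips(X)$ and $\mathcal A(X)$ are both filtrations (their structure maps are already inclusions), it suffices to verify the two ``cross'' containments
\[
\mathcal A(X)_r\subseteq \rips(X)_{r(1+\epsilon)}\qquad\text{and}\qquad \rips(X)_r\subseteq \mathcal A(X)_{r(1+\epsilon)}
\]
for every $r\in [0,\infty)$; once these are in place, $\mathcal Z$ is fully determined and its functoriality is automatic from the compositionality of inclusions.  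The degenerate case $|X|\leq 1$ is immediate since then $\mathcal A(X)=\rips(X)$, so I will focus on $|X|>1$.

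The first (easy) containment follows directly from the statement of \cref{Lem:Persistent-Cliques}: each $S(x_e,q_i)$ is a set of simplices in $\rips(X)_{q_i(1+\epsilon)}$, and this containment passes to all their faces, so $\overline{S(q_i)}\subseteq \rips(X)_{q_i(1+\epsilon)}$.  Taking the union over all $q_i\leq r$ (together with $X\subseteq \rips(X)_0$) yields $\mathcal A(X)_r\subseteq \rips(X)_{r(1+\epsilon)}$.

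The second (harder) containment is where the design of $Q$ pays off.  Given $\sigma\in \rips(X)_r$, the case $|\sigma|=1$ is trivial, so assume $|\sigma|\geq 2$ and choose an edge $e\subseteq \sigma$ of \emph{maximum} birth index; then $b_\sigma=b_e\leq r$.  Let $i$ be the unique index with $e\in E_i$, i.e., $b_e\in (q_{i-1},q_i]$.  I would then invoke \cref{Lem:q_i_properties}\,(ii) applied to $b_e\in R$: the $q_j\in Q$ it produces is by construction the smallest element of $Q$ that is at least $b_e$, which forces $j=i$, so $q_i\leq b_e(1+\epsilon)\leq r(1+\epsilon)$.  Because $e$ has maximum birth in $\sigma$, we have $\sigma\in \rips(X)_{b_e}\subseteq \rips(X)_{q_i}$, and of course $x_e\in \sigma$.  \cref{Lem:Persistent-Cliques}\,(2) therefore gives $\sigma\subseteq \tau$ for some $\tau\in S(x_e,q_i)\subseteq S(q_i)$, so $\sigma\in \overline{S(q_i)}\subseteq \mathcal A(X)_{q_i}\subseteq \mathcal A(X)_{r(1+\epsilon)}$.

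With both containments in hand, setting $\mathcal Z(r,0)=\rips(X)_r$, $\mathcal Z(r,1)=\mathcal A(X)_r$, and every morphism equal to the evident inclusion completes the construction.  The delicate step I expect to need the most care is the second containment, and within it the choice of $e$: picking the \emph{maximum-birth} edge of $\sigma$ is what ensures $\sigma$ itself (not just $e$) lies in $\rips(X)_{q_i}$, enabling \cref{Lem:Persistent-Cliques}\,(2) to be applied to $\sigma$.  Beyond this, no real obstacle is expected, since the grid $Q$ was engineered via \cref{Lem:q_i_properties}\,(ii) precisely so that every birth index in $R$ is approximated from above by some $q_i$ to within a factor of $1+\epsilon$.
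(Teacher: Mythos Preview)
Your proposal is correct and follows essentially the same approach as the paper: both arguments reduce the interleaving to the two containments $\mathcal A(X)_r\subseteq \rips(X)_{r(1+\epsilon)}$ and $\rips(X)_r\subseteq \mathcal A(X)_{r(1+\epsilon)}$, establish the first directly from the definition of $S(x,q)$, and prove the second by selecting a maximum-birth edge $e\subseteq\sigma$ and applying \cref{Lem:Persistent-Cliques}\,(2) at the corresponding $q_i$. The only cosmetic difference is that the paper first reduces to $r\in R$ and then shows $\rips_q\subset\mathcal A_q$ for a suitable $q\in Q$, whereas you work with arbitrary $r$ and bound $q_i\leq b_e(1+\epsilon)\leq r(1+\epsilon)$ directly; one small point is that your identification ``$j=i$'' appeals to the specific choice of $q_j$ made \emph{inside} the proof of \cref{Lem:q_i_properties}\,(ii) rather than its statement, but this is harmless since $q_i=\min\{q\in Q\mid q\geq b_e\}$ already follows from $e\in E_i$, so $q_i\leq q_j$ for any $q_j$ the lemma produces.
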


\begin{proof}
To simplify notation, we write $\mathcal A\coloneqq \mathcal A(X)$ and $\mathcal R\coloneqq \mathcal R(X)$.  We will prove the result by showing that $\mathcal A_r\subset \rips_{r(1+\epsilon)}$ and $\mathcal \rips_r\subset \mathcal A_{r(1+\epsilon)}$ for each $r\in [0,\infty)$.  Since the 0-skeletons of $\rips_{r}$ and $\mathcal A_{r(1+\epsilon)}$ are both $X$, to show $\mathcal A_r\subset \rips_{r(1+\epsilon)}$, it suffices to show that if $\sigma\in \mathcal A_{r}$ and $\dim(\sigma)\geq 1$, then $\sigma\in \rips_{r(1+\epsilon)}$.  But for some $q\leq r$, we have  \[\sigma\in S(x,q)\subset  \rips_{q(1+\epsilon)}\subset \rips_{r(1+\epsilon)},\] so indeed $\sigma\in  \rips_{r(1+\epsilon)}$.

To show that  $\mathcal \rips_r\subset \mathcal A_{r(1+\epsilon)}$ for each $r\in [0,\infty)$, it is enough to show this for each $r\in R$.  Assuming $r\in R$,  \cref{Lem:q_i_properties}\,(ii) implies that $r\leq q \leq r(1+\epsilon)$ for some $q\in Q$.  We claim that $\rips_{q}\subset \mathcal A_{q}$.  To prove the claim, it suffices to show that if $\sigma\in \rips_{q}$ and $\dim(\sigma)\geq 1$, then $\sigma\in \mathcal A_{q}$.  Let $e$ be an edge in $\sigma$ such that the value $b_e$ is maximized.  Then $e\in E_i$ for some $i$ with $q_i\leq q$.  Note that $x_e \in \sigma\in \rips_{q_i}$. \cref{Lem:Persistent-Cliques}\,(2) then implies that $\sigma\subset \tau$ for some $\tau\in S(x_e,q_i)$.  Since $S(x_e,q_i)\subset S(q_i)\subset \mathcal A_{q_i}\subset \mathcal A_{q}$, we have $\tau\in \mathcal A_{q}$.  Therefore, $\sigma\in \mathcal A_{q}$, as desired.   We thus have 
\[\rips_r\subset \rips_q\subset \mathcal A_{q}\subset \mathcal A_{r(1+\epsilon)}.\qedhere\]
\end{proof}

\begin{lemma}\label{Lem:Num_Distinct_Complexes}
The filtration $\mathcal A(X)$ has $O(|X|)$ distinct simplicial complexes.  
\end{lemma}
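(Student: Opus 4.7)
The plan is to observe that $\mathcal A(X)$ is piecewise constant in $r$, with breakpoints only at the values $q_i\in Q$, and then invoke \cref{Prop:|Q|=O(|X|)}.

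First I would dispatch the trivial case $|X|\leq 1$, where by definition $\mathcal A(X)=\rips(X)$ has at most one distinct complex.

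For the main case $|X|>1$, the key observation is that the defining formula
\[
\mathcal A(X)_r\;=\;X\,\cup\,\bigcup_{q_i\leq r}\overline{S(q_i)}
\]
depends on the parameter $r$ only through the finite subset $Q_{\leq r}\coloneqq\{q_i\in Q\mid q_i\leq r\}\subset Q$. As $r$ ranges over $[0,\infty)$, the set $Q_{\leq r}$ takes at most $|Q|+1$ distinct values: it is empty on $[0,q_1)$, equals $\{q_1,\ldots,q_i\}$ on $[q_i,q_{i+1})$ for $1\leq i<n$, and equals $Q$ on $[q_n,\infty)$. Hence $\mathcal A(X)$ has at most $|Q|+1$ distinct simplicial complexes. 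Applying \cref{Prop:|Q|=O(|X|)} gives the desired bound $O(|X|)$.

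There is no real obstacle; the work has already been done in establishing \cref{Prop:|Q|=O(|X|)}, which in turn rests on the WSPD bound \cref{Cor:WSPD_Linearly_Many_Distinct_Distances} together with the spacing property \cref{Lem:q_i_properties}\,(i). The only thing to check carefully is that the construction of $\mathcal A(X)$ truly introduces no new simplices between consecutive $q_i$'s, which is immediate from the definition since the $q_i$'s are precisely the indices at which new sets $S(q_i)$ are attached.
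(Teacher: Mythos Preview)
Your proposal is correct and takes essentially the same approach as the paper: both argue that $\mathcal A(X)_r$ depends on $r$ only through which elements of $Q$ lie below $r$, so the number of distinct complexes is at most $|Q|+1$, and then invoke \cref{Prop:|Q|=O(|X|)}. Your version is slightly more explicit (handling $|X|\leq 1$ separately and spelling out the interval decomposition), but the argument is the same.
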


\begin{proof}
It is clear from the definition of $\mathcal A(X)$ that for each $r\in [0,\infty)$, $\mathcal A(X)_r=\mathcal A(X)_q$ for some $q\in Q\cup \{0\}$.  Since $|Q|=O(|X|)$ by \cref{Prop:|Q|=O(|X|)}, the result follows.
\end{proof}

\begin{lemma}\label{Lem:Num_Max_at_an_Index}
For each $r \in [0, \infty)$, the complex $\mathcal A(X)_r$ has $O(|X|)$ maximal simplices, where the asymptotic notation hides a constant independent of $r$.
\end{lemma}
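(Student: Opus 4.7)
The plan is to classify the maximal simplices of $\mathcal A(X)_r$ by the vertex at which they were ``chosen,'' and then use \cref{Lem:Persistent-Cliques}\,(3) to absorb older clique sets into the most recent one, leaving only $O(1)$ candidates per vertex.

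First, I would observe that a maximal simplex $\sigma$ of $\mathcal A(X)_r$ is either a vertex of $X$ (contributing at most $|X|$ such simplices), or, by construction of $\mathcal A(X)_r$ as the union of $X$ with the face-closures of the sets $S(q_i)$ for $q_i\leq r$, the simplex $\sigma$ must itself equal an element of some $S(x_e,q_i)$ with $q_i\leq r$ and $e\in E_i$ (otherwise $\sigma$ would be a proper face of such an element, contradicting maximality).

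Next, for each $x\in X$, let
\[
I_r(x)=\{i\mid q_i\leq r\text{ and }x_e=x\text{ for some }e\in E_i\},
\]
and, assuming $I_r(x)\neq \emptyset$, let $i^\ast=i^\ast(x,r)=\max I_r(x)$. I would then argue that every maximal simplex of $\mathcal A(X)_r$ that lies in $\bigcup_{i\in I_r(x)}S(x,q_i)$ actually lies in $S(x,q_{i^\ast})$. To see this, fix $i\in I_r(x)$ with $i<i^\ast$. Since \cref{Lem:q_i_properties}\,(i) iterates to $q_{i^\ast}\geq q_i(1+\epsilon)$, \cref{Lem:Persistent-Cliques}\,(3) supplies a $\tau\in S(x,q_{i^\ast})$ with $\sigma\subset \tau$. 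As $S(x,q_{i^\ast})\subset \mathcal A(X)_{q_{i^\ast}}\subset \mathcal A(X)_r$, any $\sigma\in S(x,q_i)$ that is maximal in $\mathcal A(X)_r$ must satisfy $\sigma=\tau\in S(x,q_{i^\ast})$.

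To finish, I would assign to each non-vertex maximal simplex $\sigma$ of $\mathcal A(X)_r$ a chosen vertex $x_\sigma$ for which $\sigma\in S(x_\sigma,q_i)$ for some $i\in I_r(x_\sigma)$; this exists by the first step. Summing the bound from the previous paragraph over $x\in X$, and using $|S(x,q_{i^\ast(x,r)})|=O(1)$ from \cref{Lem:Persistent-Cliques}\,(1) with a constant independent of $r$, I obtain at most $\sum_{x\in X}O(1)=O(|X|)$ non-vertex maximal simplices, plus at most $|X|$ vertex maximal simplices, giving the desired $O(|X|)$ bound. The main subtlety, and the step to be verified most carefully, is that \cref{Lem:Persistent-Cliques}\,(3) genuinely applies across independently-made choices of the sets $S(x,q_i)$ and $S(x,q_{i^\ast})$; this is fine because, as the proof of \cref{Lem:Persistent-Cliques} shows, (3) is a consequence of (2) together with the nesting $\mathcal R(X)_{q_i(1+\epsilon)}\subset \mathcal R(X)_{q_{i^\ast}}$, so it holds for any choice satisfying (2).
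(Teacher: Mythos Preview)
Your proposal is correct and follows essentially the same argument as the paper: define, for each vertex $x$ chosen at some $q\leq r$, the largest such scale $q_x$ (your $q_{i^\ast(x,r)}$), then use \cref{Lem:q_i_properties}\,(i) together with \cref{Lem:Persistent-Cliques}\,(3) to absorb all earlier $S(x,q)$ into $S(x,q_x)$, and conclude via \cref{Lem:Persistent-Cliques}\,(1). The paper phrases this as the identity $\mathcal A(X)_r=X\cup\bigcup_{x\in Y}\overline{S(x,q_x)}$, while you phrase it as a containment of the maximal simplices, but the content is the same; your closing remark about (3) holding across independent choices is a valid observation, and the paper relies on it implicitly.
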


\begin{proof}
Let \[Y=\{x\in X\mid S(x,q)\subset \mathcal S(q) \textup{ for some } q\in Q\textup{ with }q\leq r\}.\]
For $x\in Y$, let \[q_x=\max\,\{q\leq r\mid S(x,q)\subset \mathcal S(q)\}.\]
By \cref{Lem:q_i_properties}\,(i) and \cref{Lem:Persistent-Cliques}\,(3), we have \[\mathcal A(X)_r=X\cup \left(\bigcup_{x\in Y} \overline{S(x,q_x)}\right).\]
Thus, any maximal simplex of $\mathcal A(X)_r$ is contained in the set of simplices \[X\cup \left(\bigcup_{x\in Y} S(x,q_x)\right),\] which has size $O(|X|)$, since by \cref{Lem:Persistent-Cliques}\,(1) we have $|S(x,q_x)|=O(1)$ for each $x\in Y$.
\end{proof}

\begin{proposition}\label{prop:max_simplex_bound}
For each $k\geq 0$, we have $m_k(\mathcal A(X))=O(|X|^{k+2})$. 
\end{proposition}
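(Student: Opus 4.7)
The plan is to combine the two preceding lemmas (\cref{Lem:Num_Distinct_Complexes} and \cref{Lem:Num_Max_at_an_Index}) with an elementary counting argument. Observe that any set $M$ counted by $m_k(\mathcal A(X))$ consists entirely of maximal simplices of $\mathcal A(X)_t$ for some $t \in [0,\infty)$; in particular, $M$ is a subset of size at most $k+1$ of the maximal simplices of some complex appearing in the image of $\mathcal A(X)$.

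First, I would note that by \cref{Lem:Num_Distinct_Complexes}, the filtration $\mathcal A(X)$ takes only $O(|X|)$ distinct values across $[0,\infty)$. Next, by \cref{Lem:Num_Max_at_an_Index}, each such complex has at most $C |X|$ maximal simplices for some constant $C$ independent of $r$. Hence, for each distinct complex $\mathcal A(X)_r$, the number of subsets of its maximal simplices of size at most $k+1$ is bounded by
\[
\sum_{j=0}^{k+1}\binom{C|X|}{j} = O(|X|^{k+1}),
\]
where the implicit constant depends on $k$ and $C$ but not on $|X|$.

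Finally, since every set $M$ counted by $m_k(\mathcal A(X))$ arises (perhaps with multiplicity, which only strengthens the upper bound) as such a subset for at least one of the $O(|X|)$ distinct complexes, summing the previous bound over those complexes gives
\[
m_k(\mathcal A(X)) \leq O(|X|) \cdot O(|X|^{k+1}) = O(|X|^{k+2}),
\]
as required. There is no real obstacle here: the work has already been done in \cref{Lem:Num_Distinct_Complexes} and \cref{Lem:Num_Max_at_an_Index}, and the proposition follows by a one-line counting argument. The only point requiring a moment's care is that the constant in the bound of \cref{Lem:Num_Max_at_an_Index} is independent of $r$, which is explicitly asserted there.
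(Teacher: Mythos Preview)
Your proof is correct and follows essentially the same approach as the paper: both combine \cref{Lem:Num_Distinct_Complexes} and \cref{Lem:Num_Max_at_an_Index} via the same counting argument, bounding the subsets of maximal simplices of size at most $k+1$ in each of the $O(|X|)$ distinct complexes. The paper writes the count as a sum over $j$-sets and caps the summation index at $\min(k,c|X|)$, but this is only a cosmetic difference from your binomial sum.
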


\begin{proof}
We call a set with $j$ elements a \emph{$j$-set}.  For each $r\in [0,\infty)$,  \cref{Lem:Num_Max_at_an_Index} tells us that $\mathcal A(X)_{r}$ has $O(|X|)$ maximal simplices.  Thus, for each $j \geq 0$, the number of distinct $(j+1)$-sets of maximal simplices in $\mathcal A(X)_r$ is $O(|X|^{j+1})$.  Hence, \cref{Lem:Num_Distinct_Complexes} implies that as $r$ varies, we encounter a total of $O(|X|^{j+2})$ distinct $(j+1)$-sets of maximal simplices among all simplicial complexes in $\mathcal A(X)$.    

By \cref{Lem:Num_Max_at_an_Index}, we may choose a constant $c\in \N$ such that the number of maximal simplices in $\mathcal A(X)_r$ is at most $c|X|$ for each $r$. We then have \[m_k(\mathcal A(X))=O\left(\sum_{j=0}^{\min(k,c|X|)} |X|^{j+2}\right)=O(|X|^{k+2}).\qedhere\]
\end{proof}

\begin{proof}[Proof of \Cref{Theo:Main-Theorem}]
By \cref{prop:interleaved}, $\mathcal A(X)$ and $\rips(X)$ are $(1+\epsilon)$-interleaved, so \cref{Lem:Inerleaving_and_Subdivision} implies that $\mathcal {SA}(X)$ and $\srips(X)$ are $(1+\eps)$-interleaved.  In view of the bound on $m_k(\mathcal A(X))$ given by \cref{prop:max_simplex_bound}, the theorem follows by applying  \Cref{Theo:Nerve-Model}\,(i) to $\mathcal A(X)$.  
\end{proof}

\section{Computation}
\label{Sec:Computation}\sloppypar
In this section, we give a straightforward algorithm to compute the $k$-skeleton of $\mathcal{N A}(X)$.  For $k\geq 1$ constant, the algorithm runs in time $O(|X|^{k+3})$. 

\subsection{Problem Specification}
Let $\mathcal N$ denote the $k$-skeleton of $\mathcal{N A}(X)$ and let  $P=\mathbb N^{\op}\times [0,\infty)$.  Concretely, our aim will be to compute sets $G=\sqcup_{j=0}^k G^j$ and $H$, defined below, that determine $\mathcal{N}$ up to isomorphism.  We call the pair $(G,H)$ a \emph{presentation of} $\mathcal N$, by analogy with the usual notion of a presentation of a persistence module.  One could generalize our approach to define presentations of arbitrary $\Simp$-valued functors, but we will not do so here.

We define 
\[G^j=\setbuilder*{g\in \bigsqcup_{p\in P}\mathcal N_{p};  \dim(g)=j,\ g \not\in \mathop{\mathrm{im}} \mathcal N_{p\subto \gr{g}} \textup{ for any }p< \gr{g}},\]
where $\gr{g}$ is the unique element of $P$ such that $g\in \mathcal N_{\gr{g}}$.  
We can give a more explicit description of $G^j$, as follows:  
 Recall that $Q=\{q_1,\ldots,q_n\}$ and that $q_0=0$.  For $i\in \{0,\ldots,n\}$, let $M^j_i$ denote the set of $(j+1)$-sets of maximal simplices of $\mathcal A(X)_{q_i}$ with non-empty intersection.  Let $L^j_0=M^j_{0}$, and for $i\in \{1,\ldots,n\}$, let $L^j_{i}=M^j_{i}\setminus M^j_{i-1}$.  We then have 
\begin{equation}\label{eq:Gj}
G^j=\bigsqcup_{i=0}^n L^j_{i},
\end{equation}
where for each $g\in L^j_i$,  \[\gr{g}=(|\cap_{\sigma\in g}\sigma|,q_i).\]
  
We next define $H$.  Noting that elements of $G^0$ are simplices in $\mathcal A(X)$, we say $\sigma\in G^0$ is \emph{dominated} if there exists $\tau\in G^0$ with $\sigma\subsetneq \tau$.  If $\sigma$ is dominated, let 
\begin{equation}\label{Eq:Sigma_Prime}
\sigma'=\min\, \{\tau \in G^0\mid \sigma \subsetneq \tau\},
\end{equation}
where the minimum is taken with respect to the second coordinate of $\gr{\tau}$, with ties broken by taking the minimum with respect to the lexicographical well-ordering on simplices of $\colim \mathcal A(X)$  (see \cref{Sec:Nerve_Models}).  Let 
\[H =\{(\sigma,\sigma')\mid \sigma \in G^0 \textup{ is dominated}\}.\]
We think of $(\sigma,\sigma')\in H$ as a relation that identifies $\sigma$ and $\sigma'$ at index $\gr{\sigma}\vee \gr{\sigma'}$, where $\vee$ denotes the least upper bound in $\mathbb N^{\op}\times [0,\infty)$.   

\begin{proposition}
The pair $(G,H)$ determines $\mathcal{N}$ up to isomorphism.  
\end{proposition}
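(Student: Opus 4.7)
The plan is to show that $(G,H)$ determines a functor on $P$ isomorphic to $\mathcal N$ by giving an explicit combinatorial reconstruction procedure. The reconstruction proceeds in three stages: first, recover the maximal simplices of $\mathcal A(X)_{q_i}$ at every grade; second, recover each simplicial complex $\mathcal N_{(j',q_i)}$ using the explicit description of $\mathcal{NA}(X)$ recalled in \cref{Sec:Nerve_Models}; third, recover the structure maps by an iterative rule read off from $H$.

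For the first stage, the key observation is that in the filtration $\mathcal A(X)$ a simplex can only lose maximality, never regain it. So $\sigma \in G^0$ is maximal in $\mathcal A(X)_{q_i}$ precisely when the second coordinate of $\gr{\sigma}$ is at most $q_i$ and no strict superset of $\sigma$ lies in $\mathcal A(X)_{q_i}$. Since every strict superset of $\sigma$ in $\mathcal A(X)_{q_i}$ is a face of some maximal simplex of $\mathcal A(X)_{q_i}$, and every maximal simplex of $\mathcal A(X)_{q_i}$ is an element of $G^0$, the existence of such a superset is detectable in $G^0$; moreover, the pair $(\sigma,\sigma') \in H$ records the earliest grade at which any strict $G^0$-superset of $\sigma$ appears. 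Hence $\sigma \in G^0$ is maximal in $\mathcal A(X)_{q_i}$ iff the second coordinate of $\gr{\sigma}$ is at most $q_i$ and either $\sigma$ is undominated or $(\sigma,\sigma') \in H$ with the second coordinate of $\gr{\sigma'}$ strictly greater than $q_i$. Given the maximal simplices at each grade, the simplices of $\mathcal N_{(j',q_i)}$ are determined as the $(j+1)$-subsets whose intersection has cardinality at least $j'$; for $r \in [q_i,q_{i+1})$ we set $\mathcal N_{(j',r)} = \mathcal N_{(j',q_i)}$.

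For the structure maps, those in the $\mathbb N^{\op}$-direction are inclusions of subcomplexes, so are pinned down by the complexes themselves. For those in the $[0,\infty)$-direction it suffices by functoriality to specify consecutive maps $\mathcal N_{(j',q_i) \subto (j',q_{i+1})}$, and since each simplex of $\mathcal N$ is an unordered set of maximal simplices, each such map is determined by its action on vertices. I claim the action is: $\sigma \mapsto \sigma$ if $\sigma$ remains maximal in $\mathcal A(X)_{q_{i+1}}$, and $\sigma \mapsto \sigma'$ otherwise, where $(\sigma,\sigma') \in H$. For indices $q_{i''} > q_{i+1}$, the structure map $\mathcal N_{(j',q_i) \subto (j',q_{i''})}$ is obtained by iterating this rule, so the entire functor is pinned down by $(G,H)$.

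The main obstacle is the compatibility check for the consecutive rule, namely that ``first dominator recorded in $H$'' coincides with the definition of $\mathcal N$'s structure map, which sends $\sigma$ to the lex-minimum maximal simplex of $\mathcal A(X)_{q_{i+1}}$ containing $\sigma$. This follows from the observation that, since $\sigma$ was maximal at $q_i$ but not at $q_{i+1}$, any maximal $\tau \supsetneq \sigma$ in $\mathcal A(X)_{q_{i+1}}$ cannot already lie in $\mathcal A(X)_{q_i}$ (else $\sigma$ would already have been dominated), hence is a newly born maximal simplex with $\tau \in L^0_{i+1} \subseteq G^0$ and the second coordinate of $\gr{\tau}$ equal to $q_{i+1}$; the lex-minimum over such $\tau$ is precisely $\sigma'$ by \cref{Eq:Sigma_Prime}. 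Once this compatibility is verified, the reconstructed functor agrees with the explicit description of $\mathcal N$ on objects and on vertex-level structure maps, and simplicial functoriality propagates the match to higher-dimensional simplices, yielding the required isomorphism.
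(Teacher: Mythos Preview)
Your argument is correct and follows a different route from the paper's sketch.  The paper constructs, for each $p\in P$, a quotient complex $(G/H)_p$: its vertex set is the quotient $(G^0/H)_p$ of $G^0_p$ by the equivalence relation generated by $H_p$, and its higher simplices are the images $\hat g$ of the non-degenerate elements $g\in G_p$; it then asserts that these complexes assemble into a functor $G/H\cong \mathcal N$.  You instead use $(G^0,H)$ to recover, at each $q_i$, the set of maximal simplices of $\mathcal A(X)_{q_i}$ as subsets of $X$, and then rebuild $\mathcal N$ directly from the explicit nerve description in \cref{Sec:Nerve_Models}.  A nice by-product of your approach is that it shows $(G^0,H)$ alone already determines $\mathcal N$; the sets $G^j$ for $j\geq 1$ are redundant for this purpose.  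By contrast, the paper's quotient construction is more intrinsic---it treats $(G,H)$ as purely combinatorial data (labels, grades, and a relation) and never needs to compute intersections of simplices in $X$---which makes it closer in spirit to an algebraic presentation, at the cost of leaving more verification to the reader.
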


\begin{proof}[Sketch of Proof]
 For $p\in P$, let 
\begin{align*}
G_p&=\{g\in G\mid \gr{g}\leq p\},\\ 
G^0_p&=G_p\cap G^0,\\
H_p&=\{(\sigma,\sigma')\in H\mid \gr{\sigma}\vee \gr{\sigma'}\leq p\}.
\end{align*}
 Formally, $H_p$ is a relation on $G^0_p$; let $(G^0/H)_p$ denote the quotient of $G^0$ by the equivalence relation that $H_p$ generates.  For $v\in G^0_p$, let $\hat v$ denote its equivalence class in $(G^0/H)_p$.  We say that  $g=\{\sigma_1,\ldots,\sigma_l\}\in G_p$ is \emph{non-degenerate at $p$} if $\hat \sigma_i\ne \hat \sigma_j$ for $i\ne j$.
If $g$ is non-degenerate at $p$, let $\hat g=\{\hat \sigma_1,\ldots,\hat \sigma_l\}$.  It can be checked that \[(G/H)_p\coloneqq \{\hat g \mid g \in G_p \textup{ is non-degenerate}\}\] is a simplicial complex with 0-skeleton $(G^0/H)_p$, and that as $p$ varies, the complexes $(G/H)_p$ assemble into a functor $G/H\colon P\to \Simp$ such that $G/H\cong \mathcal N$.  
\end{proof}

\begin{remark}
The presentation $(G,H)$ has an algebraic interpretation: Each $G^j$ can be identified with a minimal set of generators for the chain complex $\Ch{j}{\mathcal N}$, while $H$ can be identified with minimal set of relations for $\Ch{0}{\mathcal N}$.  In particular, $|G|+|H|$ equals the size of $\mathcal N$, as defined in \cref{Sec:Size}.  In fact, our algorithm for computing $(G,H)$ extends straightforwardly to compute minimal presentations of $\Ch{j}{\mathcal{N}}$ for all $j\in \{0,\ldots,k\}$ in the same asymptotic time, but we will omit the details for brevity's sake.
\end{remark}

\subsection{Algorithm and Complexity Analysis}  
We assume that a total order on $X$ is fixed, and that $X$ is specified via a distance matrix.  To compute $(G,H)$, we begin by computing $Q$ in time $O(|X|^2 \log |X|)$.  The cost of this is dominated by the time required to sort the non-zero distances of $X$ in increasing order.  

We next consider the computation of $G^0$.  Since $L^0_0=X$, by \cref{eq:Gj} it suffices to compute the sets $L^0_i$ for each $i>0$.  Note that if $i>0$, then $L^0_{i}\subset S(q_i)$.  Our approach to computing the sets $L^0_{i}$ involves computing the sets $S(q_i)$.

For any $q\in Q$, computing $S(q)$ amounts to computing $S(x,q)$ for each $x$ is chosen at $q$.  We compute $S(x,q)$ by following the construction in the proof of \cref{Lem:Persistent-Cliques}: We first compute the $(q\epsilon/2)$-packing $W$ of the ball $B(x,2q)$.  Since $W$ has constant size, this can be done in time $O(|X|)$ using the algorithm outlined in \cref{Rem:Greedy_Packing_Alg}. Given $W$, we can construct the set $\Gamma$ using, e.g., the Bron–Kerbosch algorithm \cite{bron1973algorithm} or a brute force approach.  Since $\Gamma$ consists of a constant number of sets, each of constant size, naively computing $S(x,q)$ from $\Gamma$ requires time $O(|X|)$ in the worst case.  Thus, computing $S(x,q)$ requires time $O(|X|)$ in total.  Since $S(q)$ is the union of at most $|X|$ sets $S(x,q)$ of constant size, and $|Q|=O(|X|)$ by \cref{Prop:|Q|=O(|X|)}, computing the sets $S(q)$ for all $q\in Q$ requires total time $O(|X|^3)$.

To compute the sets $L^0_{i}$ from the sets $S(q_i)$, we proceed inductively with respect to $i$.  Assume we are given $M^0$ and $S(q_{i+1})$, with the vertices of each simplex in each set sorted in increasing order.  We then compute $L^0_{i+1}$ and $M^0_{i+1}$, as follows:  Note that $M^0_{i+1}$ is the set of maximal simplices in $M^0_{i}\cup S(q_{i+1})$.  Given a pair of simplices in $M^0_i\cup S(q_{i+1})$, we can naively check whether one simplex contains the other in time $O(|X|)$ by iterating through the two lists of vertices.  By performing such containment tests on pairs of simplices in $S(q_{i+1})$, we first remove all non-maximal simplices in $S(q_{i+1})$.  Then, by performing further containment tests on pairs $\{\sigma,\mu\}$ where $\sigma\in S(q_{i+1})$ is maximal and $\mu\in M^0_i$, we obtain $L^0_{i+1}$ as well as $M^0_{i+1}$.    

\cref{Lem:Num_Max_at_an_Index} and \cref{Lem:Persistent-Cliques}\,(1) imply that for each $i$, $|M^0_i|=O(|X|)$ and $|S(q_{i+1})|=O(|X|)$.  In addition, we have $|Q|=O(|X|)$ by \cref{Prop:|Q|=O(|X|)}.  Thus, as $i$ varies, the total cost of all containment tests between pairs of simplices in $M^0_i\cup S(q_{i+1})$ is $O(|X|^4)$.  Hence, the total cost of computing the sets $L^0_i$ for all $i$ is $O(|X|^4)=O(|X|^{k+3})$.

Computing $H$ amounts to identifying, for each $i<n$, each simplex $\sigma \in M^0_i$ that is a proper subset of some $\tau \in L^0_{i+1}$, and for each such $\sigma$, recording the lexicographically minimal such $\tau$ (which is the simplex $\sigma'$ of \cref{Eq:Sigma_Prime}).  Thus, our computation of the sets $M^0_{i+1}$ and $L^0_{i+1}$ via containment tests extends readily to compute $H$, without increasing the asymptotic cost.  

For $j\in \{1,\ldots,k\}$, computing $L^j_i$ amounts to identifying all elements (i.e., $(j+1)$-sets) of $M^j_i$ such that at least one element of the $(j+1)$-set belongs to $L^0_j$.  As we assume $k$ is constant, the intersection of a $(j+1)$-set of simplices can be computed straightforwardly in time $O(|X|)$.  Thus, as $|M^j_i|=O(|X|^{j+1})$ and $|Q|=O(|X|)$, to compute the sets $L^j_i$ for all $i$ and all $j\in \{1,\ldots,k\}$, it suffices to compute the intersection of $O(|X|^{k+2})$ different $(j+1)$-sets, which requires time $O(|X|^{k+3})$.

\section*{Acknowledgements}
We thank Sariel Har-Peled for a helpful exchange about \cref{Cor:WSPD_Linearly_Many_Distinct_Distances}, and in particular, for pointing out that this follows from the results in \cite[Section 5]{har-peledFastConstruction2005} on well-separated pair decompositions.

\bibliographystyle{plainurl}
\bibliography{bibliography}

\end{document}